\newcommand{\id}{\operatorname{id}}
   \theoremstyle{plain}
   \newtheorem{thm}{Theorem}[section]
   \newtheorem{prop}[thm]{Proposition}
   \newtheorem{lem}[thm]{Lemma}
   \newtheorem{cor}[thm]{Corollary}
   \theoremstyle{definition}
   \newtheorem{defn}[thm]{Definition}
   \newtheorem{example}[thm]{Example}
   \theoremstyle{remark}
\author{V. Manuilov}
\date{}
\address{Moscow State University,
Leninskie Gory 1, Moscow, 
119991, Russia}
\email{manuilov@mech.math.msu.su}
\thanks{The author acknowledges partial support by the RFBR grant.}
\title{Roe bimodules as morphisms of discrete metric spaces}
\begin{document}

\maketitle

\begin{abstract}
For two discrete metric spaces, $X$ and $Y$ we consider metrics on $X\sqcup Y$ compatible with the metrics on $X$ and $Y$. 
As morphisms from $X$ to $Y$ we consider the Roe bimodules, i.e. the norm closures of bounded finite propagation operators from $l^2(X)$ to $l^2(Y)$. We study the corresponding category $\mathcal M$, which is also a 2-category. We show that almost isometries determine morphisms in $\mathcal M$. We also consider the case $Y=X$, when there is a richer algebraic structure on the set of morphisms of $\mathcal M$: it is a partially ordered semigroup with the neutral element, with involution, and with a lot of idempotents. We also give a condition when a morphism is a $C^*$-algebra.

\end{abstract}

\section*{Introduction}








Let $X=(X,d_X)$ be a discrete countable metric space, and let $H_X=l^2(X)$ be the Hilbert space of square-summable functions on $X$, with the standard basis consisting of delta-functions $\delta_x$, $x\in X$. A bounded operator $T$ on $H_X$ with the matrix $(T_{xy})_{x,y\in X}$ has propagation less than $L$ if $d_x(x,y)\geq L$ implies that $T_{xy}=0$. The $*$-algebra of all bounded operators of finite propagation is denoted by $\mathbb C_u[X]$, and its norm completion in $\mathbb B(H_X)$ is the uniform Roe algebra $C^*_u(X)$. Our standard references are \cite{Burago} for metric space theory, and \cite{Roe} for Roe algebras. 

Let the objects of a category $\mathcal M$ be all discrete countable metric spaces.

Let $(X,d_X)$ and $(Y,d_Y)$ be two objects in $\mathcal M$. Let $Z=X\sqcup Y$, and let $D_{X,Y}$ denote the set of all metrics $d$ on $Z$ such that $d|_X=d_X$ and $d|_Y=d_Y$. Such metrics are used to define the Gromov--Hausdorff distance between $X$ and $Y$. For each $d\in D(X,Y)$, let $\mathbb M_d[X,Y]$ denote the set of all bounded finite propagation operators $T:H_X\to H_Y$, and let $M_d(X,Y)$ be its norm closure in the bimodule $\mathbb B(H_X,H_Y)$ of all bounded operators from $H_X$ to $H_Y$.

These bimodules are Hilbert $C^*$-bimodules \cite{MT} over the uniform Roe algebras $C^*_u(X)$ and $C^*_u(Y)$, and we call such bimodules {\it uniform Roe bimodules}. We show that these bimodules can be used as morphisms in the category $\mathcal M$ of discrete metric spaces.
Our aim is to study this category. It is clearly a 2-category \cite{BMZ}, though with too few 2-morphisms. We show that almost isometries 
determine morphisms in $\mathcal M$. We also consider the case $Y=X$, when there is a richer algebraic structure on the set of morphisms of $\mathcal M$: it is a partially ordered semigroup with the neutral element, with involution, and with a lot of idempotents. We also give a condition when a morphism is a $C^*$-algebra.

\section{Morphisms in $\mathcal M$}


\begin{lem} With respect to the natural action of the uniform Roe algebras by composition,
\begin{itemize}
\item
$\mathbb M_d(X,Y)$ is a $\mathbb C_u[X]$-$\mathbb C_u[Y]$-bimodule; 
\item
$M_d(X,Y)$ is a Hilbert $C^*_u(X)$-$C^*_u(Y)$-bimodule.
\end{itemize}
\end{lem}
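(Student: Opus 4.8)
The plan is to reduce everything to two elementary facts: that the combined metric $d\in D_{X,Y}$ satisfies the triangle inequality across $X$ and $Y$, and that multiplication of bounded operators is norm continuous. Throughout I write the matrix coefficients of an operator $T\colon H_X\to H_Y$ as $T_{yx}=\langle\delta_y,T\delta_x\rangle$ and recall that $T$ has propagation less than $L$ exactly when $T_{yx}=0$ whenever $d(x,y)\geq L$; since $d$ is symmetric, the adjoint $T^*\colon H_Y\to H_X$ satisfies the same bound. The algebra $\mathbb C_u[X]$ acts by precomposition and $\mathbb C_u[Y]$ by postcomposition.

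For the first statement, given $R\in\mathbb C_u[X]$ of propagation $<L_1$ and $T\in\mathbb M_d[X,Y]$ of propagation $<L_2$, I would estimate the coefficients of $TR$: if $(TR)_{yx}=\sum_{x'}T_{yx'}R_{x'x}\neq 0$ then some $x'$ has $d(y,x')<L_2$ and $d_X(x',x)<L_1$, whence $d(y,x)<L_1+L_2$, so $TR\in\mathbb M_d[X,Y]$. The symmetric computation with $S\in\mathbb C_u[Y]$ gives $ST\in\mathbb M_d[X,Y]$. Boundedness is automatic and the bimodule axioms are just associativity of composition, so $\mathbb M_d[X,Y]$ is a bimodule over the two $*$-algebras.

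For the second statement I would first promote the two actions to the completions by continuity: for $S\in C^*_u(Y)$ and $T\in M_d(X,Y)$, approximating nets of finite-propagation operators together with $\|S_nT_n-ST\|\leq\|S_n\|\,\|T_n-T\|+\|S_n-S\|\,\|T\|$ show $ST\in M_d(X,Y)$, and likewise on the right. I then introduce the $C^*_u(X)$-valued inner product $\langle T_1,T_2\rangle_X=T_1^*T_2$ and the $C^*_u(Y)$-valued inner product ${}_Y\langle T_1,T_2\rangle=T_1T_2^*$. The same triangle-inequality estimate, applied to $T_1^*T_2$ and using that $T_1^*$ shares the propagation bound of $T_1$, shows these products lie in $\mathbb C_u[X]$ and $\mathbb C_u[Y]$ for finite-propagation $T_i$, hence in $C^*_u(X)$ and $C^*_u(Y)$ on all of $M_d(X,Y)$ by continuity.

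The remaining Hilbert-bimodule axioms are inherited from the standard structure of $\mathbb B(H_X,H_Y)$ as a Hilbert $\mathbb B(H_X)$-module and $\mathbb B(H_Y)$-module: $\mathbb C$-linearity, the identity $\langle T_1,T_2R\rangle_X=\langle T_1,T_2\rangle_X R$, positivity of $T^*T$, and the compatibility ${}_Y\langle T_1,T_2\rangle T_3=T_1\langle T_2,T_3\rangle_X$, which is simply $(T_1T_2^*)T_3=T_1(T_2^*T_3)$. Completeness is free, since the Hilbert-module norm $\|\langle T,T\rangle_X\|^{1/2}=\|T^*T\|^{1/2}$ coincides with the operator norm and $M_d(X,Y)$ is norm closed by definition. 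The only nonformal ingredient is the propagation bookkeeping, whose single essential input is that $d$ is a metric on all of $Z=X\sqcup Y$, so that cross terms such as $d(y,x')+d_X(x',x)$ can be summed by the triangle inequality; I expect this to be the one point deserving care, the rest being routine Hilbert $C^*$-module theory.
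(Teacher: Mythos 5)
Your proposal is correct and follows essentially the same route as the paper: check that composition preserves finite propagation via the triangle inequality on the combined metric, pass to closures by norm continuity, and use the inner products $\langle T_1,T_2\rangle = T_1^*T_2$ and $T_1T_2^*$. The paper dismisses the propagation bookkeeping as "obvious" and "trivially true," so your version merely supplies the details the paper omits.
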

\begin{proof}
Let $T\in\mathbb B(H_X,H_Y)$, $R\in\mathbb B(H_X)$, $S\in\mathbb B(H_Y)$ be bounded operators of finite propagation with respect to a metric $d\in D(X,Y)$. Then $R\circ T\circ S$ is obviously of finite propagation with respect to $d$.
As the inner $C^*_u(X)$-valued (resp., $C^*_u(Y)$-valued) product is given by $\langle T,S\rangle=T^*\circ S$ (resp. $\langle T,S\rangle=T\circ S^*$), it remains to check that $T^*\circ S$ and $T\circ S^*$ are of finite propagation in $\mathbb B(H_X)$ and $\mathbb B(H_Y)$ respectively, which is trivially true. 

\end{proof}

We shall call these bimodules {\it uniform Roe bimodules}. Let morphisms in the category $\mathcal M$ be all uniform Roe bimodules, i.e. all $M_d(X,Y)$, where $d\in D(X,Y)$.

Let $\mathbb K=\mathbb K(H_X,H_Y)$ denote the set of compact operators. 
\begin{lem}\label{comp}
$\mathbb K\subset M_d(X,Y)$ for any $d\in D(X,Y)$, and if $\mathbb K= M_d(X,Y)$ then there exists $L>0$ such that for any injective map $f:X\to Y$, only finite number of points $x\in X$ satisfy $d(x,f(y))\leq L$.

\end{lem}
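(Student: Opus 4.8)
The plan is to prove the two assertions separately. The inclusion $\mathbb{K}\subset M_d(X,Y)$ is a soft density argument: for $x\in X$ and $y\in Y$ let $e_{yx}\colon H_X\to H_Y$ be the rank-one operator sending $\delta_x\mapsto\delta_y$ and every other basis vector to $0$. Its matrix has a single nonzero entry, located at the pair $(y,x)$, so the propagation condition is vacuously satisfied for any $L>d(x,y)$; hence $e_{yx}$ has finite propagation and lies in $\mathbb{M}_d[X,Y]$. Finite linear combinations of the $e_{yx}$ are exactly the finite-rank operators $H_X\to H_Y$, so $\mathbb{M}_d[X,Y]$ contains all of them. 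Since the finite-rank operators are dense in $\mathbb{K}$ and $M_d(X,Y)$ is by definition the norm closure of $\mathbb{M}_d[X,Y]$, we obtain $\mathbb{K}\subset M_d(X,Y)$.

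For the second assertion I would argue by contraposition. Assume the stated conclusion fails, so that for every $L>0$ there is an injective map $f\colon X\to Y$ for which the set $S_{f,L}=\{x\in X:d(x,f(x))\le L\}$ is infinite; fix one value of $L$ together with such an $f$, and write $S=S_{f,L}$. Define $V\colon H_X\to H_Y$ by $V\delta_x=\delta_{f(x)}$ for $x\in S$ and $V\delta_x=0$ otherwise. Because $f$ is injective, the vectors $\{\delta_{f(x)}\}_{x\in S}$ form an orthonormal family, so $V$ extends to a bounded partial isometry with $\|V\|=1$, whose domain projection is the orthogonal projection onto the closed linear span of $\{\delta_x:x\in S\}$. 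The only nonzero matrix entries of $V$ sit at the pairs $(f(x),x)$ with $x\in S$, and each of these satisfies $d(x,f(x))\le L$; hence $V$ has propagation at most $L+1$, so $V\in\mathbb{M}_d[X,Y]\subset M_d(X,Y)$.

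It then remains to see that $V$ is not compact, which already forces $M_d(X,Y)\ne\mathbb{K}$ and completes the contrapositive. Since $S$ is infinite, pick distinct points $x_n\in S$; the sequence $(\delta_{x_n})$ tends weakly to $0$, yet $\|V\delta_{x_n}\|=\|\delta_{f(x_n)}\|=1$ for all $n$, so $V$ does not send this weakly null sequence to a norm-null one and therefore cannot be compact. The only genuine point of care in the whole argument is the simultaneous control of the two defining features of $V$: injectivity of $f$ is precisely what guarantees that $V$ is a genuine bounded (indeed partial isometric) operator rather than an unbounded map, while the bound $d(x,f(x))\le L$ is precisely what keeps its propagation finite. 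Both the inclusion and the contraposition are otherwise routine.
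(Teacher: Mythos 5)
Your proposal is correct and follows essentially the same route as the paper: finite-rank operators have finite propagation, giving $\mathbb K\subset M_d(X,Y)$, and the second claim is handled by building the non-compact finite-propagation partial isometry $V=\sum_{x\in S}e_{f(x),x}$, which is exactly the operator $T=\sum_n e_{x_n,y_n}$ the paper uses (with $y_n=f(x_n)$ and injectivity of $f$ playing the role of the distinctness of the $y_n$). You merely supply the details the paper leaves implicit, namely that injectivity makes $V$ a bounded partial isometry and that non-compactness follows from the weakly null sequence $(\delta_{x_n})$.
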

\begin{proof}
Any finite rank operator obviously belongs to $M_d(X,Y)$ for any $d\in D(X,Y)$, hence the closure also lies in $M_d(X,Y)$. Now suppose that $M_d(X,Y)=\mathbb K$, and that for any $L>0$ there exist infinitely many points $x_n\in X$ and infinitely many points $y_n\in Y$, $n\in\mathbb N$, such that $d(x_n,y_n)\leq L$. Then $T=\sum_{n\in\mathbb N}e_{x_n,y_n}\in M_d(X,Y)$ is not compact.

\end{proof}

\begin{lem}
Let $M_d(X,Y)$ be a full right Hilbert $C^*$-module over $C^*_u(X)$. Then there exists $L>0$ and a map $f:X\to Y$ such that $d(x,f(x))\leq L$ for any $x\in X$.

\end{lem}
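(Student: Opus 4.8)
The plan is to argue by contraposition, showing that if no uniform $L$ and map $f$ exist, then $M_d(X,Y)$ fails to be full over $C^*_u(X)$. First I would record the elementary reformulation that the existence of a pair $(L,f)$ with $d(x,f(x))\le L$ for all $x$ is equivalent to $\sup_{x\in X}d(x,Y)<\infty$, where $d(x,Y)=\inf_{y\in Y}d(x,y)$: a uniform bound lets one pick $f(x)\in Y$ within that distance of each $x$, and conversely any such $f$ witnesses the bound. So I assume instead that $\sup_{x}d(x,Y)=\infty$ and fix a sequence of distinct points $x_n\in X$ with $R_n:=d(x_n,Y)\to\infty$.

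The device I would use is the diagonal projection $P=\sum_n e_{x_n,x_n}$. Being diagonal in the standard basis, it has propagation $0$ and norm $1$, so $P\in\mathbb C_u[X]\subset C^*_u(X)$. The key observation is that finite propagation annihilates far-away basis vectors: if $T:H_X\to H_Y$ has propagation $<L$, then $T\delta_{x_n}=0$ whenever $R_n\ge L$, because the column of $T$ at $x_n$ is supported on $\{y\in Y:d(x_n,y)<L\}=\emptyset$. Writing $Q_N:=\sum_{n\ge N}e_{x_n,x_n}$, this gives $TQ_N=0$, and therefore $Q_N\langle T,S\rangle=Q_N T^*S=(TQ_N)^*S=0$ for any finite-propagation $T,S$ of propagation $<L$, provided $N$ is large enough that $R_n\ge L$ for all $n\ge N$.

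Now I would invoke fullness. If $M_d(X,Y)$ is full, then $P$ lies in the closed linear span of the inner products $\langle T,S\rangle=T^*S$; approximating each factor by a finite-propagation operator (using continuity of multiplication and of the adjoint) I can find a finite sum $A=\sum_i T_i^*S_i$ with all $T_i,S_i$ of propagation $<L$ for one common $L$, and $\|P-A\|<\tfrac12$. Choosing $N$ with $R_n\ge L$ for all $n\ge N$, the previous step yields $Q_N A=0$, while $Q_N P=Q_N$ since $Q_N\le P$. Hence $Q_N=Q_N(P-A)$, so $1=\|Q_N\|\le\|P-A\|<\tfrac12$, a contradiction. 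This forces $\sup_x d(x,Y)<\infty$ and, by the reformulation above, produces the desired $L$ and $f$.

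The step I expect to be the crux is the \emph{uniformity} of the propagation bound. Fullness only guarantees that each element of $C^*_u(X)$ is approximated by finite sums with \emph{some} finite propagation, and that bound may grow from element to element; a naive attempt to obstruct the approximation of a single $e_{x,x}$ fails because the required propagation could well exceed $d(x,Y)$. Packaging infinitely many escaping points into the single diagonal projection $P$ is precisely what converts the hypothesis ``no uniform bound'' into a genuine obstruction: any one finite approximation of $P$ carries a finite propagation $L$, beyond which infinitely many of the $x_n$ are simultaneously killed, leaving the norm-one tail $Q_N$ unaccounted for.
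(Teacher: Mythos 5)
Your proof is correct and rests on the same mechanism as the paper's: fullness plus unitality/approximation yields a single finite sum $\sum_i T_i^*S_i$ of common finite propagation $L$ within norm $\tfrac12$ of a fixed element of $C^*_u(X)$, and finite propagation forces the columns at points far from $Y$ to vanish. The paper runs this directly on the identity and reads off $f$ from the nonvanishing diagonal entries, whereas you run it contrapositively on the diagonal projection over escaping points; this is only a repackaging of the same argument.
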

\begin{proof}
Recall that fullness means that the sums $\sum_{i=1}^n \langle T_i,S_i\rangle$, $S_i,T_i\in T_i,S_i\in M_d(X,Y)$, is dense in $C^*_u(X)$. The latter is unital, hence, there exists $n\in\mathbb N$, $L>0$ and $T_i,S_i\in M_d(X,Y)$ of propagation less than $L$ such that $\|1-\sum_{i=1}^nT_i^*\circ S_i\|<\frac{1}{2}$. Applying this to the diagonal matrix entries, we get $|1-\sum_{i=1}^n \sum_{y\in Y}(\overline {T}_i)_{xy}(S_i)_{yx}|<\frac{1}{2}$ for any $x\in X$. Then, for any $x\in X$, we can find $i$ such that $\sum_{y\in Y}(T_i)_{xy}S_{yx}\neq 0$. Then there is $y\in Y$ (non-uniquely defined) such that $(T_i){xy}(S_i)_{yx}\neq 0$, i.e. both $(T_i)_{xy}$ and $(S_i)_{yx}$ are non-zero. Set $f(x)=y$. As the propagation of $S_i$ and $T_i$ is less than $L$, we conclude that $d(x,f(x))\leq L$.

\end{proof}

Note that different metrics in $D(X,Y)$ can give the same uniform Roe bimodule. Recall that $X$ is uniformly discrete if $\inf_{x\neq x'}d(x,x')>0$, and $X$ is proper if each ball is compact, i.e. consists of a finite number of points. Recall that two metrics, $d$, $d'$ are coarsely equivalent (we write $d\sim_c d'$) if there exists a monotonely increasing function $f$ such that $f^{-1}(d(x,y))\leq d'(x,y)\leq f(d(x,y))$ for any $x\in X$, $y\in Y$.

\begin{lem}
Let $d,d'\in D(X,Y)$. Consider the following conditions:
\begin{itemize}
\item[(i)]
$d\sim_c d'$; 
\item[(ii)]
$M_{d'}(X,Y)=M_d(X,Y)$. 

\end{itemize}
Then (i) implies (ii). If, additionally, $X$ and $Y$ are uniformly discrete and proper then (ii) implies (i).

\end{lem}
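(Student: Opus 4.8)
The plan is to prove the two implications separately, exploiting the matrix-entry description of finite propagation operators.

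\medskip

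\noindent\textbf{(i)$\Rightarrow$(ii).} Suppose $d\sim_c d'$ via a monotone increasing $f$ satisfying $f^{-1}(d(x,y))\leq d'(x,y)\leq f(d(x,y))$. First I would show that the two sets of \emph{finite propagation} operators coincide: if $T\in\mathbb M_d(X,Y)$ has propagation less than $L$, i.e. $T_{xy}=0$ whenever $d(x,y)\geq L$, then $d'(x,y)\leq f(d(x,y))$ forces $T_{xy}=0$ whenever $d'(x,y)\geq f(L)$, so $T$ has propagation less than $f(L)$ with respect to $d'$. The coarse inequality in the other direction gives the reverse inclusion, so $\mathbb M_d[X,Y]=\mathbb M_{d'}[X,Y]$ as sets of operators. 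Taking norm closures in $\mathbb B(H_X,H_Y)$ (the norm is the operator norm, independent of the metric) yields $M_d(X,Y)=M_{d'}(X,Y)$. This direction is routine and should be one short paragraph.

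\medskip

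\noindent\textbf{(ii)$\Rightarrow$(i) under uniform discreteness and properness.} This is where the work lies. The hypotheses guarantee that balls are finite and that distinct points stay uniformly separated, so the single matrix units $e_{xy}$ (with $d(x,y)$ arbitrary) are legitimate finite propagation operators. The key observation is that $e_{xy}\in\mathbb M_d[X,Y]$ with propagation $d(x,y)+\varepsilon$, so membership of $e_{xy}$ in the \emph{finite propagation} algebra is automatic; the information about the metric is instead encoded in which \emph{uniformly bounded families} of matrix units can be summed to give a single finite propagation operator. Concretely, for each $L>0$ the operator $T_L=\sum_{d(x,y)<L}e_{xy}$ (or a suitable bounded variant) lies in $\mathbb M_d[X,Y]$ precisely because uniform discreteness and properness bound the number of nonzero entries in each row and column; whether it lies in $\mathbb M_{d'}[X,Y]$ constrains how large $d'(x,y)$ can be when $d(x,y)<L$. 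I expect the main obstacle to be the bookkeeping that turns the equality $M_d=M_{d'}$ of \emph{closed} bimodules into a pointwise comparison of the two metrics.

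\medskip

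To extract the coarse equivalence I would argue as follows. Define, for each $s\geq 0$,
\eq{
\varphi(s)=\sup\{\,d'(x,y): x\in X,\ y\in Y,\ d(x,y)\leq s\,\}.
}
The crux is to show $\varphi(s)<\infty$ for every $s$. Suppose not: then there are points with $d(x_n,y_n)\leq s$ but $d'(x_n,y_n)\to\infty$. The operator $\sum_n e_{x_n y_n}$ has propagation $\leq s$ with respect to $d$ (so lies in $M_d(X,Y)$ after checking boundedness, which uses uniform discreteness to keep it a bounded sum of matrix units), yet it cannot be approximated in norm by $d'$-finite-propagation operators, since any such approximant misses the entries with large $d'$-distance; this contradicts $M_d(X,Y)=M_{d'}(X,Y)$. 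Hence $\varphi$ is finite, and symmetrically the analogous function with the roles of $d,d'$ reversed is finite; combining the two and replacing $\varphi$ by a monotone majorant produces the required $f$ with $f^{-1}(d(x,y))\leq d'(x,y)\leq f(d(x,y))$. The properness hypothesis is what lets me pass from "bounded on each $d$-ball" to a genuine controlling function, and uniform discreteness is what guarantees the test operators $\sum_n e_{x_n y_n}$ are bounded; without these, the matrix-unit argument breaks down, which is exactly why the converse is asserted only in the proper, uniformly discrete case.
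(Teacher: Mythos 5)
Your overall strategy coincides with the paper's: the forward direction is the same routine comparison of propagation bounds, and the converse is the same contradiction argument, testing $M_d(X,Y)=M_{d'}(X,Y)$ against an operator $\sum_n e_{x_ny_n}$ built from pairs with $d(x_n,y_n)$ bounded and $d'(x_n,y_n)\to\infty$, then observing that such an operator stays at distance $1$ from every $d'$-finite-propagation operator.

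There is one step you wave at but do not actually carry out, and the hypothesis you invoke for it is the wrong one. You say the boundedness of $T=\sum_n e_{x_ny_n}$ ``uses uniform discreteness to keep it a bounded sum of matrix units.'' Uniform discreteness does nothing here: if infinitely many of the $x_n$ coincided with a single point $x$, then $T\delta_x=\sum_n\delta_{y_n}$ would not lie in $l^2(Y)$ and $T$ would be unbounded, no matter how separated the points of $X$ are. What actually makes $T$ bounded (indeed a partial isometry) is that one can pass to a subsequence along which all the $x_n$ are pairwise distinct and all the $y_n$ are pairwise distinct, and this is where \emph{properness} enters: if some $x_n$ satisfied $d(x_n,y_m)<R$ for infinitely many $m$, the triangle inequality would put infinitely many distinct $y_m$ inside a ball of radius $2R$ in $Y$, contradicting finiteness of balls. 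This is exactly the subsequence argument the paper supplies, and without it your test operator need not exist as a bounded operator. With that lemma inserted, your proof closes and is the paper's proof.
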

\begin{proof}

Let (i) hold, and let $T\in M_d(X,Y)$ be of propagation less than $L$ with respect to $d$ then $d(x,y)>L$. If $d'(x,y)>L'=f(L)$ then $d(x,y)>L$, hence $T_{xy}=0$, therefore $T$ has propagation less than $L'$ with respect to $d'$, i.e. $T\in M_{d'}(X,Y)$. Interchaging $d$ and $d'$, we obtain that (ii) holds.

Now assume that $X$ and $Y$ are uniformly discrete and proper, and that $M_d(X,Y)\subset M_{d'}(X,Y)$, but there is no monotonely increasing function such that $d'(x,y)\leq f(d(x,y))$ for any $x\in X$, $y\in Y$. The latter means that there exists some $R>0$ such that the set $\{d'(x,y):d(x,y)<R\}\subset [0,\infty)$ is unbounded: indeed, if not then we may set 
$$
f(R)=\sup\{d'(x,y):d(x,y)<R\}. 
$$
Hence there are sequences $\{x_n\}_{n\in\mathbb N}\subset X$ and $\{y_n\}_{n\in\mathbb N}\subset Y$ such that $d(x_n,y_n)<R$ and $d'(x_n,y_n)>n$. We claim that by passing to a subsequence, we may assume that all $x_n$ and all $y_n$, $n\in\mathbb N$, are different. Indeed, assume that there exists $n$ such that $d(x_n,y_m)<R$ and $d'(x_n,y_m)>m$ for infinitely many $m$. Then, by the triangle inequality, $d_Y(y_n,y_m)<2R$ for infinitely many $m$ --- a contradiction.

Assuming that all $x_n$ and all $y_n$, $n\in\mathbb N$, are different, set $T=\sum_{n\in\mathbb N}e_{x_n,y_n}$, where $e_{xy}$ denotes the elementary matrix (the rank one operator taking $\delta_x\in H_X$ to $\delta_y\in H_Y$). This sum is weakly convergent, so $T$ is well defined. Obviously, $T$ has finite propagation with respect to $d$, but its distance from finite propagation operators with respect to $d'$ equals 1.  

\end{proof}

\begin{defn}
Two metrics, $d$ and $d'$, in $D(X,Y)$ are equivalent (we write $d\sim d'$) if $M_{d}(X,Y)=M_{d'}(X,Y)$.

\end{defn}

We write $\mathcal Mor(X,Y)=D(X,Y)/\sim$, and $[d]\in \mathcal Mor(X,Y)$ for the class of $d\in D(X,Y)$.

\section{Composition of morphisms} 

In order to define the composition of morphisms we need the following lemma.

\begin{lem}\label{Lemma_d}
Let $X,Y,Z\in \mathcal Ob(\mathcal M)$. Let $d_{XY}\in D(X,Y)$ be a metric on $X\sqcup Y$ and $d_{Y,Z}\in D(Y,Z)$ a metric on $Y\sqcup Z$. Then the formula
$$
d(x,z)=\inf_{y\in Y}(d_{X,Y}(x,y)+d_{Y,Z}(y,z)),\quad x\in X,z\in Z,
$$ 
defines a metric in $D(X,Z)$.

\end{lem}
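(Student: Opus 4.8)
The plan is to define the candidate distance $d$ on $W=X\sqcup Z$ by $d|_X=d_X$, $d|_Z=d_Z$, and by the stated infimum formula on the cross pairs $x\in X$, $z\in Z$, and then to check the metric axioms; since the restrictions to $X$ and to $Z$ are correct by construction, membership in $D(X,Z)$ follows the moment $d$ is shown to be a metric. The cheap axioms come first. Symmetry is built into the formula because $d_{XY}$ and $d_{YZ}$ are symmetric, so $d(x,z)=d(z,x)$. Finiteness follows by fixing any single $y_0\in Y$, which gives $d(x,z)\le d_{XY}(x,y_0)+d_{YZ}(y_0,z)<\infty$, and nonnegativity is immediate since every term in the infimum is nonnegative.

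The bulk of the work is the triangle inequality $d(p,r)\le d(p,q)+d(q,r)$ for $p,q,r\in W$. As the new distances occur only between $X$ and $Z$, the only cases not already covered by the triangle inequalities of $d_X$ and $d_Z$ are those in which the relevant pairs straddle the two components, and these split into two types. In the first type a single within-component distance is absorbed into one infimum: for instance, when $p,q\in X$ and $r\in Z$, the inequality $d_{XY}(p,y)\le d_X(p,q)+d_{XY}(q,y)$ in $X\sqcup Y$, after adding $d_{YZ}(y,r)$ and taking the infimum over $y$, yields exactly $d(p,r)\le d_X(p,q)+d(q,r)$; the remaining first-type cases are identical after using symmetry. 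In the second type two cross-distances must recombine into a within-component distance: when $p,r\in X$ and the intermediate point $q$ lies in $Z$, I would expand $d(p,q)$ and $d(q,r)$ at arbitrary $y,y'\in Y$, use $d_{YZ}(y,q)+d_{YZ}(y',q)\ge d_{YZ}(y,y')=d_Y(y,y')$ in $Y\sqcup Z$, and then chain through $Y$ inside $X\sqcup Y$ via
\[
d_{XY}(p,y)+d_Y(y,y')+d_{XY}(y',r)\ge d_{XY}(p,r)=d_X(p,r),
\]
so that taking the infimum over $y$ and $y'$ gives $d(p,q)+d(q,r)\ge d_X(p,r)$. The symmetric case, with $p,r\in Z$ and intermediate point in $X$, is handled the same way.

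The step I expect to be the real obstacle is positivity: that $d(x,z)>0$ for $x\in X$ and $z\in Z$, which are automatically distinct points of $W$. This is the one place where the infimum could a priori collapse to $0$. If it did, I would pick $y_n\in Y$ with $d_{XY}(x,y_n)+d_{YZ}(y_n,z)\to 0$, so that both $d_{XY}(x,y_n)\to 0$ and $d_{YZ}(y_n,z)\to 0$; the triangle inequality in $X\sqcup Y$ then forces $d_Y(y_m,y_n)\to 0$, i.e. $\{y_n\}$ is Cauchy in $Y$. Here a regularity assumption on $Y$ is indispensable: if $Y$ is uniformly discrete the sequence is eventually a constant $y^\ast$, while if $Y$ is proper it converges and hence is again eventually equal to some $y^\ast\in Y$; either way $d_{XY}(x,y^\ast)=0$ with $x\ne y^\ast$, contradicting that $d_{XY}$ is a genuine metric. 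Without such a hypothesis the formula produces only a pseudometric, since gluing one-point spaces $X$ and $Z$ to $Y=\{1/n\}$ already gives $d(x,z)=0$, so I would record uniform discreteness (or properness) of the spaces as a standing assumption rather than leave positivity unaddressed.
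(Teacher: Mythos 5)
Your treatment of the triangle inequality is essentially the paper's own argument: the paper likewise reduces to the two cross-component configurations, in one absorbing a within-component distance into the infimum, and in the other recombining $d_{YZ}(y,z)+d_{YZ}(z,y')\ge d_Y(y,y')$ and then chaining through $Y$ inside $X\sqcup Y$ --- exactly your two types, with the remaining cases dispatched by symmetry. Where you genuinely depart from the paper is on positivity, and you are right to insist on it: the paper's proof verifies only the triangle inequality and never checks that $d(x,z)>0$ for $x\in X$, $z\in Z$, yet this can fail under the paper's stated hypotheses. Your example is valid: take $X=\{x\}$, $Z=\{z\}$, and $Y=\{1/n:n\in\mathbb N\}$ with the Euclidean metric (a discrete, countable, but not uniformly discrete and not proper space, hence an admissible object of $\mathcal M$ as defined), with $d_{XY}(x,1/n)=d_{YZ}(1/n,z)=1/n$; these are legitimate metrics on the respective disjoint unions, and the infimum formula gives $d(x,z)=0$ with $x\neq z$, so $d$ is only a pseudometric. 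Your repair --- assuming $Y$ uniformly discrete or proper, so that a minimizing sequence $y_n$ with $d_{XY}(x,y_n)\to 0$ and $d_{YZ}(y_n,z)\to 0$ is Cauchy in $Y$ and hence eventually constant, yielding a contradiction with $d_{XY}(x,y^\ast)>0$ --- is exactly what is needed, and recording it as a standing hypothesis is the right call. In short, your proof matches the paper where the paper gives an argument, and correctly isolates a genuine gap where it does not.
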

\begin{proof}
Due to symmetry, we have to check the triangle inequality for $x,x'\in X$ and $z\in Z$. Fix $\varepsilon>0$ and let $y,y'\in Y$ satisfy 
$$
d_{XY}(x,y)+d_{YZ}(y,z)-d(x,z)<\varepsilon;\quad d_{XY}(x',y')+d_{YZ}(y',z)-d(x',z)<\varepsilon.
$$ 
Then 
\begin{eqnarray*}
d_X(x,x')&\leq& d_{XY}(x,y)+d_{XY}(y,x')\leq d_{XY}(x,y)+d_Y(y,y')+d_{XY}(y',x')\\
&\leq&d_{XY}(x,y)+d_{YZ}(y,z)+d_{YZ}(z,y')+d_{XY}(y',x')\leq d(x,z)+d(x',z)+2\varepsilon;\\
d(x',z)&\leq&d_{XY}(x',y)+d_{YZ}(y,z)\leq d_X(x',x)+d_{XY}(x,y)+d_{YZ}(y,z)\\
&\leq&d_X(x',x)+d(x,y)+\varepsilon.
\end{eqnarray*}
Taking $\varepsilon$ arbitrarily small, we obtain the triangle inequality, hence $d\in D(X,Z)$.

\end{proof}

We shall denote this metric by $d_{Y,Z}\circ d_{X,Y}$.

For Banach subspaces $M\subset\mathbb B(H_X,H_Y)$, $N\subset\mathbb B(H_Y,H_Z)$, let $M\hat\otimes N$ denote the norm closure, in $\mathbb B(H_X,H_Z)$, of the span of all compositions of the form $S\circ T$, where $T\in M$, $S\in N$.

\begin{lem}\label{composition}
Let $T\in\mathbb M_{d_{XY}}(X,Y)$, $S\in\mathbb M_{d_{YZ}}(Y,Z)$. Then the composition $S\circ T$ lies in $\mathbb M_d(X,Z)$, where $d=d_{Y,Z}\circ d_{X,Y}$ is determined by Lemma \ref{Lemma_d}. 

\end{lem}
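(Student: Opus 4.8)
The plan is to prove the statement directly: since $S\circ T$ is a composition of bounded operators it is automatically bounded, with $\|S\circ T\|\leq\|S\|\,\|T\|$, so the only thing to verify is that $S\circ T$ has \emph{finite propagation} with respect to the infimum-convolution metric $d=d_{Y,Z}\circ d_{X,Y}$. First I would fix constants: by hypothesis there are $L_T,L_S>0$ such that $T$ has propagation less than $L_T$ with respect to $d_{X,Y}$ and $S$ has propagation less than $L_S$ with respect to $d_{Y,Z}$. The claim will be that $S\circ T$ has propagation less than $L:=L_T+L_S$ with respect to $d$.

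Next I would look at the matrix coefficients of the composition. For $x\in X$ and $z\in Z$, the coefficient $(S\circ T)_{x,z}=\langle\delta_z,\,S\circ T\,\delta_x\rangle$ is given by the sum $\sum_{y\in Y}T_{x,y}\,S_{y,z}$ over the intermediate space $Y$. The essential observation is that a summand $T_{x,y}\,S_{y,z}$ can be non-zero only if simultaneously $T_{x,y}\neq0$ and $S_{y,z}\neq0$; by the finite propagation of $T$ and $S$ this forces $d_{X,Y}(x,y)<L_T$ and $d_{Y,Z}(y,z)<L_S$ for that particular $y$.

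Then I would feed such an intermediate point into the definition of $d$. Because $d(x,z)=\inf_{y'\in Y}\bigl(d_{X,Y}(x,y')+d_{Y,Z}(y',z)\bigr)$, the existence of a single $y$ with the two bounds above yields $d(x,z)\leq d_{X,Y}(x,y)+d_{Y,Z}(y,z)<L_T+L_S=L$. Reading this contrapositively: if $d(x,z)\geq L$ then every summand vanishes, hence $(S\circ T)_{x,z}=0$. This is precisely the assertion that $S\circ T$ has propagation less than $L$ with respect to $d$, so $S\circ T\in\mathbb M_d(X,Z)$.

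I do not anticipate a genuine obstacle, as the metric $d$ was engineered in Lemma \ref{Lemma_d} exactly so that this triangle-type estimate closes. The one point meriting a word of care is the matrix-coefficient bookkeeping over a possibly infinite $Y$: one should note that we never need to evaluate the full sum $\sum_{y}T_{x,y}S_{y,z}$, but only the fact that it vanishes once no admissible intermediate index exists, which follows from the argument above without any convergence subtleties beyond the boundedness of $S\circ T$ already recorded.
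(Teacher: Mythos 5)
Your proof is correct and follows essentially the same argument as the paper: both show that if $d(x,z)$ exceeds the sum of the two propagation bounds, then no intermediate $y\in Y$ can simultaneously satisfy $d_{X,Y}(x,y)<L_T$ and $d_{Y,Z}(y,z)<L_S$, so every term of $\sum_{y}T_{x,y}S_{y,z}$ vanishes. The only cosmetic difference is that the paper uses a single common bound $R$ and gets propagation $2R$, while you keep separate bounds $L_T$, $L_S$ and get the slightly sharper $L_T+L_S$.
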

\begin{proof}
Let $T$ and $S$ have propagation less than $R$. If $d(x,z)>2R$ then, for any $y\in Y$, either $d_{XY}(x,y)$ or $d_{YZ}(y,z)$ is greater than $R$, therefore, for any $y\in Y$, either $S_{yz}$ or $T_{xy}$ is zero, where $S_{yz}$ denotes the matrix entries of $S$, hence $\sum_{y\in Y}S_{zy}T_{yx}=0$, so $S\circ T$ has propagation less than $2R$.  

\end{proof}

Thus, the composition $c(T,S)=S\circ T$ defines a linear map 
$$
c:\mathbb M_{d_{XY}}(X,Y)\otimes_{\mathbb C_u[Y]}\mathbb M_{d_{YZ}}(Y,Z)\to \mathbb M_{d}(X,Z),
$$
which gives rize to the inclusion
$$
\bar c:
M_{d_{XY}}(X,Y)\hat\otimes M_{d_{YZ}}(Y,Z)\to M_{d}(X,Z).
$$

Recall that a discrete metric space $X$ has bounded geometry if for any $R>0$ the number $|B_R(x)|$ of points in a ball $B_R(x)$ of radius $R$ centered at $x\in X$ is uniformly bounded with respect to $x\in X$.

\begin{lem}
Let $X$, $Y$ and $Z$ be of bounded geometry. Then $M_{d_{XY}}(X,Y)\hat\otimes M_{d_{YZ}}(Y,Z)= M_{d}(X,Z)$.

\end{lem}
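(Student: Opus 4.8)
The inclusion $\bar c\bigl(M_{d_{XY}}(X,Y)\hat\otimes M_{d_{YZ}}(Y,Z)\bigr)\subseteq M_d(X,Z)$ is already furnished by Lemma \ref{composition}, so the plan is to prove the reverse inclusion. Since $M_d(X,Z)$ is the norm closure of the finite propagation operators $\mathbb M_d(X,Z)$, and the right-hand side $M_{d_{XY}}(X,Y)\hat\otimes M_{d_{YZ}}(Y,Z)$ is closed by definition, it suffices to show that every $W\in\mathbb M_d(X,Z)$ of propagation less than some $L$ can be written as a \emph{finite} sum $W=\sum_{i=1}^m S_i\circ T_i$ with $T_i\in\mathbb M_{d_{XY}}(X,Y)$ and $S_i\in\mathbb M_{d_{YZ}}(Y,Z)$. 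I write $W_{zx},(T_i)_{yx},(S_i)_{zy}$ for matrix entries and use the elementary operators $e_{y,x},e_{z,y}$.

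First I would record the consequence of bounded geometry that drives everything, taking $N$ to be the maximum of the three bounded-geometry constants for radius $2L$. If $d_{XY}(x,y)<L$ and $d_{XY}(x,y')<L$ then $d_Y(y,y')<2L$, so $\{y\in Y:d_{XY}(x,y)<L\}$ has at most $N$ elements, uniformly in $x$; the analogous bounds hold with the roles of $X,Y,Z$ permuted. Next, for each pair $(x,z)\in\supp W$ we have $d(x,z)<L$, that is $\inf_{y}\bigl(d_{XY}(x,y)+d_{YZ}(y,z)\bigr)<L$, so there is at least one $y\in Y$ with $d_{XY}(x,y)<L$ and $d_{YZ}(y,z)<L$; choose one such $y=y(x,z)$. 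This produces a set of triples $\Gamma=\{(x,y(x,z),z):(x,z)\in\supp W\}$, exactly one for each pair in $\supp W$.

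The combinatorial heart is to split $\Gamma$ into finitely many pieces on which the composition reduces to a clean matrix product. Form the conflict graph on $\Gamma$ in which two triples are adjacent whenever they share any one of their three coordinates. By the counting above the number of triples sharing a fixed $x$-, $z$-, or $y$-coordinate is at most $N$, $N$, and $N^2$ respectively, so this graph has uniformly bounded degree $D\leq 2N+N^2$. A graph of maximal degree $D$ is $(D+1)$-colourable (greedily, and in the infinite case by the De Bruijn--Erd\H{o}s theorem), so $\Gamma=\Gamma_1\sqcup\dots\sqcup\Gamma_m$ with $m\leq D+1$, where within each $\Gamma_i$ the $x$-, $y$-, and $z$-coordinates are all pairwise distinct. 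For each colour class set $T_i=\sum_{(x,y,z)\in\Gamma_i}e_{y,x}$ and $S_i=\sum_{(x,y,z)\in\Gamma_i}W_{zx}\,e_{z,y}$. Distinctness of the $y$-coordinates makes $T_i$ a partial isometry ($\|T_i\|\leq 1$) of propagation less than $L$ for $d_{XY}$, while distinctness of the $z$- and $y$-coordinates makes $S_i$ a weighted partial isometry ($\|S_i\|\leq\|W\|$) of propagation less than $L$ for $d_{YZ}$. The injectivity of the $y$-coordinate is precisely what kills the cross terms: an entrywise check gives $(S_i\circ T_i)_{zx}=W_{zx}$ exactly when $(x,z)$ is the pair of a triple in $\Gamma_i$ and $0$ otherwise. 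As the $\Gamma_i$ partition $\Gamma$ and $\Gamma$ covers each pair of $\supp W$ once, summing yields $W=\sum_{i=1}^m S_i\circ T_i$.

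This exhibits $W$ in the algebraic span of compositions, hence $\mathbb M_d(X,Z)\subseteq M_{d_{XY}}(X,Y)\hat\otimes M_{d_{YZ}}(Y,Z)$, and taking norm closures gives the desired equality. I expect the main obstacle to be exactly the routing carried out in the third paragraph: arranging the factorisation through $Y$ so that the matrix products reproduce $W$ with no cross terms, and doing so with \emph{finitely} many summands. The finiteness is where bounded geometry is indispensable, since it is what forces the conflict graph to have uniformly bounded degree; without it the number of colours, and hence of summands, could be unbounded, and one would at best recover a norm approximation rather than an exact finite factorisation.
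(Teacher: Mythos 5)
Your proof is correct, and it follows the same basic strategy as the paper's: use bounded geometry to split a finite-propagation operator into finitely many pieces supported on partial bijections, and factor each piece through $Y$ by choosing, for every $(x,z)$ in the support, an intermediate point $y$ nearly realizing the infimum defining $d(x,z)$. The difference is in how the decomposition is organized. The paper first writes $T=\sum_{k=1}^N T_k$ as a sum of width-$1$ band operators using bounded geometry of $X$ and $Z$ only (each $T_k$ supported on the graph of an injective $\sigma:X\to Z$), and then factors $T_k=S\circ R$ through a choice map $f:X\to Y$ that need \emph{not} be injective; bounded geometry of $Y$ (really, the uniform bound on $|f^{-1}(y)|$) is then invoked separately to see that $R$ and $S$ are bounded. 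You instead perform a single bounded-degree colouring of the set of triples $(x,y(x,z),z)$, which forces injectivity in all three coordinates at once, so your factors $T_i,S_i$ are automatically (weighted) partial isometries with explicit norm bounds $\|T_i\|\le 1$, $\|S_i\|\le\|W\|$, and the cross-term cancellation is immediate. Your version is slightly more self-contained on the boundedness question at the cost of a larger (but still uniformly bounded) number of summands, roughly $N^2$ rather than $N$; both arguments use bounded geometry of all three spaces in an essential way.
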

\begin{proof}
As $M_{d_{XY}}(X,Y)\hat\otimes M_{d_{YZ}}(Y,Z)$ is closed, it suffices to show that it contains all operators of finite propagation. Let $T\in\mathbb M_d(X,Z)$. Note that the condition of bounded geometry of $X$ and of $Z$ means that $T$ has the matrix with a bounded number of non-zero entries in each row and in each column, hence it can be written as a finite sum of width 1 band operators: $T=\sum_{k=1}^N T_k$, where each $T_k$ has the form $\sum_{x\in X}\lambda_xe_{x,\sigma(x)}$, where $\lambda_x\in\mathbb C$ and $\sigma:X\to Z$ is an injective map. Let us show that each $T_k$ can be written as $S\circ R$ for some $S\in\mathbb M_{d_{YZ}}(Y,Z)$, $R\in\mathbb M_{d_{XY}}(X,Y)$. For $x\in X$, $z=\sigma(x)$, let $y=f(x)\in Y$ satisfy 
$$
d_{XY}(x,y)+d_{YZ}(y,z)\leq d(x,z)+1. 
$$
Set 
$$
R=\sum_{x\in X}\lambda_xe_{x,f(x)}, \quad S=\sum_{x\in X}e_{f(x),z}. 
$$
Bounded geometry of $Y$ implies that $R$ and $S$ are bounded operators, as the number of points in the sets $f^{-1}(y)$ is uniformly bounded. It is also easy to see that both $R$ and $S$ have finite propagation. and that $T_k=S\circ R$.

\end{proof}




\begin{cor}

If $X$, $Y$, $Z$ are of bounded geometry, and if $[d_{XY}]=[d'_{XY}]$, $[d_{YZ}]=[d'_{YZ}]$ then $[d_{YZ}\circ d_{XY}]=[d'_{YZ}\circ d'_{XY}]$. 

\end{cor}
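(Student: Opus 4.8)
The plan is to observe that this corollary is essentially a formal consequence of the preceding lemma (the one asserting that, for spaces of bounded geometry, $M_{d_{XY}}(X,Y)\hat\otimes M_{d_{YZ}}(Y,Z)= M_{d}(X,Z)$ with $d=d_{Y,Z}\circ d_{X,Y}$). First I would unwind the definitions: by the definition of $\sim$, the hypothesis $[d_{XY}]=[d'_{XY}]$ means precisely that $M_{d_{XY}}(X,Y)=M_{d'_{XY}}(X,Y)$ as subspaces of $\mathbb B(H_X,H_Y)$, and likewise $[d_{YZ}]=[d'_{YZ}]$ means $M_{d_{YZ}}(Y,Z)=M_{d'_{YZ}}(Y,Z)$ as subspaces of $\mathbb B(H_Y,H_Z)$. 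The goal $[d_{YZ}\circ d_{XY}]=[d'_{YZ}\circ d'_{XY}]$ is, again by definition, the equality $M_{d_{YZ}\circ d_{XY}}(X,Z)=M_{d'_{YZ}\circ d'_{XY}}(X,Z)$.

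The key step I would isolate is that the operation $M\hat\otimes N$ is defined purely in terms of the Banach subspaces $M$ and $N$: it is the norm closure of the span of all compositions $S\circ T$ with $T\in M$ and $S\in N$. Consequently $M\hat\otimes N$ depends only on $M$ and $N$ and not on any metric data used to produce them. Applying this to the equalities above yields
$$
M_{d_{XY}}(X,Y)\hat\otimes M_{d_{YZ}}(Y,Z) = M_{d'_{XY}}(X,Y)\hat\otimes M_{d'_{YZ}}(Y,Z).
$$
Now I would invoke the bounded-geometry lemma for each triple separately: since $X$, $Y$, $Z$ all have bounded geometry, the left-hand side equals $M_{d_{YZ}\circ d_{XY}}(X,Z)$ and the right-hand side equals $M_{d'_{YZ}\circ d'_{XY}}(X,Z)$. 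Combining the three displayed identities gives $M_{d_{YZ}\circ d_{XY}}(X,Z)=M_{d'_{YZ}\circ d'_{XY}}(X,Z)$, which is exactly $[d_{YZ}\circ d_{XY}]=[d'_{YZ}\circ d'_{XY}]$.

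I do not expect a genuine obstacle here, since all the real work has already been done in the preceding lemma, where the bounded-geometry hypothesis is essential (it guarantees that the composition bimodule is actually generated by the compositions of finite-propagation operators, not merely contained in a larger space). The only point that must be stated carefully is that $\hat\otimes$ is a well-defined operation on subspaces, so that replacing $d_{XY},d_{YZ}$ by equivalent metrics $d'_{XY},d'_{YZ}$ — which by hypothesis produce the very same subspaces — cannot change the $\hat\otimes$ product. In particular, it is worth emphasizing that the bounded-geometry lemma is applied once to the pair $(d_{XY},d_{YZ})$ and once to $(d'_{XY},d'_{YZ})$, so the conclusion is simply the transitivity of these three equalities; no estimate or passage to subsequences is needed.
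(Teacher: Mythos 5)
Your argument is correct and is exactly the intended one: the paper states this as an immediate corollary of the preceding lemma (giving no separate proof), and your reasoning --- that $\hat\otimes$ depends only on the subspaces, which coincide by hypothesis, so two applications of the bounded-geometry lemma give the equality --- is precisely the implicit argument. Nothing is missing.
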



Thus we can define the map $c_{XYZ}:(M_{d_{XY}}(X,Y), M_{d_{YZ}}(Y,Z))\mapsto M_d(X,Z)$, where $d$ is determined by Lemma \ref{Lemma_d}.

Note that $D(X,Y)=D(Y,X)$, but, the same metric $d\in D(X,Y)$, considered as a morphism, gives two different morphisms --- from $X$ to $Y$ and back. Let us write $d^*$ for the morphism from $Y$ to $X$. We have $M_{d^*}(Y,X)=(M_d(X,Y))^*$.

\begin{prop}\label{inv-semi}
The metrics $d$ and $d\circ d^*\circ d$ are equivalent for any $d\in D(X,Y)$.

\end{prop}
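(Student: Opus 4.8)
The plan is to compute the composed metric $\tilde d := d\circ d^*\circ d$ explicitly using Lemma \ref{Lemma_d}, and then to show that $\tilde d$ sits in a two-sided sandwich $d(x,y)\le \tilde d(x,y)\le 3\,d(x,y)$. This makes $d$ and $\tilde d$ coarsely equivalent, so the lemma asserting that coarse equivalence implies equality of the bimodules yields $M_d(X,Y)=M_{\tilde d}(X,Y)$, which is precisely $d\sim\tilde d$.

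First I would unwind the triple composition, read from right to left as morphisms $X\xrightarrow{d}Y\xrightarrow{d^*}X\xrightarrow{d}Y$, by applying Lemma \ref{Lemma_d} twice. The inner composition $d^*\circ d\in D(X,X)$ is given by
$$
(d^*\circ d)(x,x'')=\inf_{y\in Y}\bigl(d(x,y)+d^*(y,x'')\bigr)=\inf_{y\in Y}\bigl(d(x,y)+d(y,x'')\bigr),
$$
where I use that $d^*(y,x'')=d(x'',y)$ since $d^*$ is the same metric on $X\sqcup Y$ read in the other direction. Composing this with the outer copy of $d$ (again via Lemma \ref{Lemma_d}) gives
$$
\tilde d(x,y)=\inf_{x''\in X,\; y'\in Y}\bigl(d(x,y')+d(y',x'')+d(x'',y)\bigr).
$$

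The estimates are then the heart of the matter, and both are immediate once the formula for $\tilde d$ is in hand. The lower bound $\tilde d(x,y)\ge d(x,y)$ is just the triangle inequality applied to the chain $x,\,y',\,x'',\,y$ inside the single metric space $(X\sqcup Y,d)$. For the upper bound I would plug the admissible choice $y'=y$ and $x''=x$ into the infimum, obtaining $d(x,y)+d(y,x)+d(x,y)=3\,d(x,y)$, hence $\tilde d(x,y)\le 3\,d(x,y)$. The sandwich $d\le \tilde d\le 3\,d$ gives $d\sim_c\tilde d$ (take $f(t)=3t$), and the coarse-equivalence lemma finishes the proof; alternatively one can avoid that lemma and argue directly that any operator of propagation less than $L$ with respect to $d$ has propagation less than $3L$ with respect to $\tilde d$, while any operator of propagation less than $L$ with respect to $\tilde d$ has propagation less than $L$ with respect to $d$, so the two norm closures coincide. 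The only real obstacle I anticipate is purely bookkeeping: keeping straight the order of the two applications of Lemma \ref{Lemma_d} and the identification $d^*(y,x)=d(x,y)$; the inequalities themselves are one-line consequences of the triangle inequality and a single substitution.
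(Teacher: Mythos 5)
Your proposal is correct and follows essentially the same route as the paper: the same explicit formula $\inf_{x'\in X,\,y'\in Y}\bigl(d(x,y')+d(y',x')+d(x',y)\bigr)$ for the triple composition, the same substitution $y'=y$, $x'=x$ for the upper bound $3d(x,y)$, and the same triangle-inequality argument for the lower bound $d(x,y)$. Your final step, passing from the sandwich $d\le d\circ d^*\circ d\le 3d$ to equality of the bimodules via coarse equivalence, is left implicit in the paper but is exactly the intended conclusion.
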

\begin{proof}
Let $x\in X$, $y\in Y$.
On the one hand, taking $x'=x$, $y'=y$, we get
$$
(d\circ d^*\circ d)(x,y)=\inf_{x'\in X,y'\in Y}d(x,y')+d(y',x')+d(x',y)\leq 3d(x,y).
$$
On the other hand, passing to infimum in the triangle inequality, we get
\begin{eqnarray*}
(d\circ d^*\circ d)(x,y)&=&\inf_{x'\in X,y'\in Y}d(x,y')+d(y',x')+d(x',y)\geq \inf_{x'\in X}d_X(x,x')+d(x',y)\\
&\geq& d(x,y).
\end{eqnarray*}

\end{proof}

For given $X$ and $Y$, and for two uniform Roe bimodules $M_{d_1}(X,Y)$ and $M_{d_2}(X,Y)$ define a 2-morphism $\alpha:M_{d_1}(X,Y)\to M_{d_2}(X,Y)$ as a bimodule homomorphism. In fact, there are very few of them, but anyway, we get a 2-category.

\section{Almost isometries as morphisms}

\begin{defn}
A map $f:X\to Y$ is called an almost isometry if there exists $C>0$ such that 
\begin{equation}\label{eq1}
d_X(x,x')-C\leq d_Y(f(x),f(x'))\leq d_X(x,x')+C
\end{equation}
for any $x,x'\in X$.


An almost isometry $f:X\to Y$ is an almost isometric isomorphism if there exists an almost isometry $Y\to X$ and $C>0$ such that $d(g\circ f(x),x)<C$ and $d(f\circ g(y),y)<C$ for all $x\in X$, $y\in Y$. 

Let $A\subset X$. An almost isometry $f:A\to Y$ is called a partial almost isometry from $X$ to $Y$ with support $A$.

\end{defn}

Note that any isometry is an almost isometry, but quasi-isometries are usually not almost isometries. An example of an almost isometry for $X=Y=\Gamma$, where $\Gamma$ is a finitely generated group with the word-length metric, is provided by conjugation by a fixed element $g\in\Gamma$.

The following example shows that there are invertible almost isometries that are not close to any genuine isometry. Let $X\subset\mathbb Z$, $X=\cup_{n\in\mathbb Z}x_n$, where $x_n=-2n$ for $n\leq 0$ and $x_{2n}=4n$, $x_{2n-1}=4n-3$ for $n>0$. Let $Y=X$. It is clear that the only isometry of $X$ is the identity map. But the map $f:X\to X$ given by $f(x_n)=x_{-n}$, $n\in\mathbb Z$, is an almost isometry. Note that for non-discrete spaces the problem of existence of almost isometries far from genuine isometries (AI-rigidity) is more difficult, cf. \cite{Kar}.

\begin{lem}\label{def_of d_f}
Let $A\subset X$, $f:A\to Y$ be an almost isometry, and let $C$ satisfy (\ref{eq1}). For $x\in X$, $y\in Y$, set 
$$
d(x,y)=\inf_{\tilde x\in A}d_X(x,\tilde x)+C/2+d_Y(f(\tilde x),y).
$$
Then $d$ is a metric on $X\sqcup Y$, i.e. satisfies the triangle inequality.

\end{lem}
\begin{proof}
First, let $x_1,x_2\in X$, $y\in Y$. Fix $\varepsilon>0$, and let $\tilde x_1,\tilde x_2\in A$ satisfy
$$
d_X(x_1,\tilde x_1)+C/2+d_Y(f(\tilde x_1),y)<d(x_1,y)+\varepsilon;
$$
$$
d_X(x_2,\tilde x_2)+C/2+d_Y(f(\tilde x_2),y)<d(x_2,y)+\varepsilon.
$$
Then 
\begin{eqnarray*}
d_X(x_1,x_2)&\leq&d_X(x_1,\tilde x_1)+d_X(\tilde x_1,\tilde x_2)+d_X(\tilde x_2,x_2)\\
&\leq& 
d_X(x_1,\tilde x_1)+(d_Y(f(\tilde x_1),f(\tilde x_2))+C)+d_X(\tilde x_2,x_2)\\
&\leq&d_X(x_1,\tilde x_1)+C/2+d_Y(f(\tilde x_1),y)+d_X(x_2,\tilde x_2)+C/2+d_Y(f(\tilde x_2),y)\\
&\leq& d(x_1,y)+d(x_2,y)+2\varepsilon.
\end{eqnarray*}
(here the first inequality is the triangle inequality for $d_X$, the second inequality follows from almost isometricity of $f$, the third inequality is the triangle inequality for $d_Y$, and the last inequality follows from our choice of the points $\tilde x_1$ and $\tilde x_2$). As $\varepsilon$ is arbitrary, this implies that $d_X(x_1,x_2)\leq d(x_1,y)+d(y,x_2)$.

\begin{eqnarray*}
d(x_1,y)&=&\inf_{\tilde x\in A}d_X(x_1,\tilde x)+C/2+d_Y(f(\tilde x),y)\\
&\leq&d_X(x_1,\tilde x_2)+C/2+d_Y(f(\tilde x_2),y)\\
&\leq& d_X(x_1,x_2)+d_X(x_2,\tilde x_2)+C/2+d_Y(f(\tilde x_2),y)\\
&\leq&d_X(x_1,x_2)+d(x_2,y)+\varepsilon
\end{eqnarray*}
(the infimum here is estimated by setting $\tilde x=\tilde x_2$, which gives the first inequality, the second inequality is the triangle inequality for $d_X$, and the last inequality follows from our choice of the point $\tilde x_2$),
hence $d(x_1,y)\leq d_X(x_1,x_2)+d(x_2,y)$.

Now let $x\in X$, $y_1,y_2\in Y$. Let $\tilde x_1,\tilde x_2\in A$ satisfy
$$
d_X(x,\tilde x_1)+C/2+d_Y(f(\tilde x_1),y_1)<d(x,y_1)+\varepsilon;
$$
$$
d_X(x,\tilde x_2)+C/2+d_Y(f(\tilde x_2),y_2)<d(x,y_2)+\varepsilon.
$$
Then
\begin{eqnarray*}
d_Y(y_1,y_2)&\leq&d_Y(y_1,f(\tilde x_1)+d_Y(f(\tilde x_1),\tilde x_2)+d_Y(\tilde x_2,y_2)\\
&\leq&d_Y(y_1,f(\tilde x_1))+(d_X(\tilde x_1,\tilde x_2)+C)+d_Y(\tilde x_2,y_2)\\
&\leq& d_Y(y_1,f(\tilde x_1))+C/2+d_X(\tilde x_1,x)+d_X(x,\tilde x_2)+C/2+d_Y(\tilde x_2,y_2)\\
&\leq& d(y_1,x)+d(y_2,x)+2\varepsilon
\end{eqnarray*}
(the first inequality is the triangle inequality for $d_Y$, the second inequality follows from almost isometricity of $f$, the third inequality is the triangle inequality for $d_X$, and the last inequality follows from our choice of the points $\tilde x_1$ and $\tilde x_2$). This implies that $d_Y(y_1,y_2)\leq d(y_1,x)+d(x,y_2)$.

Finally,
\begin{eqnarray*}
d(x,y_1)&=&\inf_{\tilde x\in A}d_X(x,\tilde x)+C/2+d_Y(f(\tilde x),y_1)\\
&\leq&d_X(x,\tilde x_2)+C/2+d_Y(f(\tilde x_2),y_1)\\
&\leq&d_X(x,\tilde x_2)+C/2+d_Y(f(\tilde x_2),y_2)+d_Y(y_2,y_1)\\
&\leq&d(x,y_2)+\varepsilon+d_Y(y_2,y_1)
\end{eqnarray*}
(the infimum is majorized by evaluation at $\tilde x=\tilde x_2$, hence the first inequality, the second inequality is the triangle inequality for $d_Y$, and the last inequality follows from our choice of the point $\tilde x_2$), hence $d(x,y_1)\leq d(x,y_2)+d(y_2,y_1)$ holds.

\end{proof} 

Given a partial almost isometry $f:X\to Y$, we denote the metric on $X\sqcup Y$ defined above by $d_f$. Note that $d_f$ depends on the constant $C$, but the equivalence class $[d_f]$ doesn't. 

\begin{lem}\label{comp}
Let $f:X\to Y$, $g:Y\to Z$ be almost isometries. There exists $C>0$ such that $d_g\circ d_f(x,z)\leq d_{g\circ f}(x,z)\leq d_g\circ d_f(x,z)+C$ for any $x\in X$, $z\in Z$.

\end{lem}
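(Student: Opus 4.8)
The plan is to unwind both metrics from their defining infima and compare term by term. Recall that $d_f(x,y)=\inf_{\tilde x\in A}d_X(x,\tilde x)+C_f/2+d_Y(f(\tilde x),y)$ for the constant $C_f$ witnessing almost isometricity of $f$, and similarly for $d_g$ with constant $C_g$. The composed metric $d_g\circ d_f$ is, by Lemma \ref{Lemma_d}, the infimum over the intermediate space $Y$ of $d_f(x,y)+d_g(y,z)$, whereas $d_{g\circ f}$ is defined directly from the almost isometry $g\circ f\colon X\to Z$ with some constant $C_{gf}$; note $g\circ f$ is indeed an almost isometry with $C_{gf}\le C_f+C_g$. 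So the whole statement is a two-sided comparison of infima, and the natural strategy is to prove each inequality by exhibiting, for an arbitrary $\varepsilon>0$, test points that make one side majorize the other up to $\varepsilon$ and an additive constant.

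For the lower bound $d_g\circ d_f(x,z)\le d_{g\circ f}(x,z)$, I would start from a near-optimal $\tilde x\in X$ for $d_{g\circ f}(x,z)$, so that $d_X(x,\tilde x)+C_{gf}/2+d_Z(g(f(\tilde x)),z)$ is within $\varepsilon$ of $d_{g\circ f}(x,z)$. The plan is to route the composed metric through the intermediate point $y=f(\tilde x)\in Y$: plugging $y=f(\tilde x)$ into the infimum defining $d_g\circ d_f$ and estimating $d_f(x,f(\tilde x))\le d_X(x,\tilde x)+C_f/2$ (take $\tilde x$ itself in the infimum for $d_f$) together with $d_g(f(\tilde x),z)\le C_g/2+d_Z(g(f(\tilde x)),z)$ (take $f(\tilde x)$ in the infimum for $d_g$). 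Summing these bounds gives $d_g\circ d_f(x,z)\le d_X(x,\tilde x)+(C_f+C_g)/2+d_Z(g(f(\tilde x)),z)$, and after comparing the constant $(C_f+C_g)/2$ against $C_{gf}/2$ one reaches $d_{g\circ f}(x,z)$ up to an additive constant; letting $\varepsilon\to 0$ finishes this direction.

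For the upper bound $d_{g\circ f}(x,z)\le d_g\circ d_f(x,z)+C$, I would begin from near-optimal choices for the composed side: an intermediate $y\in Y$ and then witnesses $\tilde x\in A$ for $d_f(x,y)$ and $\tilde y\in B$ (the domain of $g$) for $d_g(y,z)$. The key move is to collapse the two intermediate witnesses into a single point of $X$ on which to evaluate $d_{g\circ f}$. Here I expect to use almost isometricity of $f$ to control $d_Y(f(\tilde x),\tilde y)$ — which the triangle inequality bounds by $d_Y(f(\tilde x),y)+d_Y(y,\tilde y)$ — and then almost isometricity of $g$ to convert $d_Y(f(\tilde x),\tilde y)$ into a bound on $d_Z(g(f(\tilde x)),g(\tilde y))$, paying an additive $C_g$. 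Evaluating $d_{g\circ f}(x,z)$ at $\tilde x$ and chaining $d_Z(g(f(\tilde x)),z)\le d_Z(g(f(\tilde x)),g(\tilde y))+d_Z(g(\tilde y),z)$ then assembles all the pieces, and collecting the various constants $C_f,C_g,C_{gf}$ produces the uniform additive $C$.

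The main obstacle will be this upper bound: the two sides use independent intermediate witnesses, and the only way to bridge them is to push $\tilde y\in Y$ across $g$, which costs the additive constant $C_g$ and is precisely why the inequality cannot be sharp. I would keep careful track of which constant ($C_f$, $C_g$, or $C_{gf}$) each $\varepsilon$-approximation contributes, since the final $C$ is a fixed combination of them and the whole point of the lemma is that this discrepancy is bounded independently of $x$ and $z$. The lower bound, by contrast, is essentially a direct substitution and should be routine.
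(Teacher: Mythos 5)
Your proposal is correct and follows essentially the same route as the paper: for the lower bound, substitute the near-optimal witness $\tilde x$ for $d_{g\circ f}$ together with the intermediate points $y=\tilde y=f(\tilde x)$ into the infimum defining $d_g\circ d_f$; for the upper bound, evaluate $d_{g\circ f}$ at the witness $\tilde x$ for $d_f$ and use the triangle inequality in $Z$ plus almost isometricity of $g$ to convert the $Y$-distances $d_Y(f(\tilde x),y)$ and $d_Y(y,\tilde y)$ into $Z$-distances at the cost of an additive constant. The only cosmetic difference is that you track separate constants $C_f,C_g,C_{gf}$ where the paper uses a single $C$; choosing $C_{gf}=C_f+C_g$ makes your lower bound exact, as the statement requires.
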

\begin{proof}
Let $C$ satisfy 
$$
|d_Y(f(x),f(x'))-d_X(x,x')|\leq C;\quad |d_Z(g(y),g(y'))-d_Y(y,y')|\leq C
$$
for any $x,x'\in X$, $y,y'\in Y$.
Then $|d_Z(g\circ f(x),f(x'))-d_X(x,x')|\leq 2C$, hence $g\circ f$ is also an almost isometry.

Recall that 
\begin{equation}\label{eq2}
d_{g}\circ d_f(x,z)=\inf_{y\in Y}(\inf_{\tilde x\in X}(d_X(x,\tilde x)+d_Y(f(\tilde x),y))+\inf_{\tilde y\in Y}(d_Y(y,\tilde y)+d_Z(g(\tilde y),z))+C;
\end{equation}
$$
d_{g\circ f}(x,z)=\inf_{\tilde x\in X}(d_X(x,\tilde x)+d_Z(g\circ f(\tilde x),z))+C.
$$

Fix $\varepsilon>0$, and let $x'\in X$ satisfy $d_{g\circ f}(x,z)\geq d_X(x,x')+d_Z(g\circ f(x'),z)+C-\varepsilon$.

Taking $\tilde x=x'$, $y=f(x')=\tilde y$ in (\ref{eq2}), we see that
\begin{eqnarray*}
d_{g}\circ d_f(x,z)&\leq& d_X(x,x')+d_Y(f(x'),f(x'))+d_Y(y,y)+d_Z(g(f(x')),z)+C\\
&=&d_X(x,x')+d_Z(g(f(x')),z)+C\leq d_{g\circ f}(x,z)+\varepsilon,  
\end{eqnarray*}
hence $d_{g}\circ d_f(x,z)\leq d_{g\circ f}(x,z)$ for any $x\in X$, $z\in Z$.

On the other hand,
\begin{eqnarray*}
d_{g\circ f}(x,z)&\leq&d_X(x,\tilde x)+d_Z(g\circ f(\tilde x),z)+C\\
&\leq&d_X(x,\tilde x)+d_Z(g\circ f(\tilde x),g(y))+d_Z(g(y),g(\tilde y))+d_Z(g(\tilde y),z)+C\\
&\leq&d_X(x,\tilde x)+d_Y(f(\tilde x),y)+C+d_Y(y,\tilde y)+C+d_Z(g(\tilde y),z)+C
\end{eqnarray*}
for any $\tilde x\in X$, $y,\tilde y\in Y$, hence taking infimum for the right-hand side, we obtain that
$d_{g\circ f}(x,z)\leq d_g\circ d_f(x,z)+C$.

\end{proof}

Thus, by Lemma \ref{comp}, $[d_{g\circ f}]=[d_g\circ d_f]$. 
As a corollary, we get the following statement.
\begin{thm}
Let $\mathcal M_{AI}$ denote the category of discrete metric spaces of bounded geometry with almost isometries as morphisms. Then the mapping $f\mapsto M_{d_f}$ is a functor from $\mathcal M_{AI}$ to $\mathcal M$.

\end{thm}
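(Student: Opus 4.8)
The plan is to verify the three functor axioms directly, since the substantive estimates have all been prepared by the preceding lemmas. Write $F$ for the assignment $f\mapsto M_{d_f}$. First I would check that $F$ lands in $\mathcal M$ on objects: a discrete metric space $X$ of bounded geometry is automatically countable, because fixing $x_0\in X$ gives $X=\bigcup_{n}B_n(x_0)$, a countable union of finite balls, so every object of $\mathcal M_{AI}$ is an object of $\mathcal M$ and $F$ is the identity on objects. On morphisms I set $F(f)=M_{d_f}(X,Y)=[d_f]$, with $d_f$ the metric produced by Lemma \ref{def_of d_f}; this is well defined because, as remarked after that lemma, the class $[d_f]$ does not depend on the auxiliary constant $C$.

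The heart of the argument is preservation of composition, and this is exactly what the composition lemma for almost isometries (Lemma \ref{comp}) supplies. Recall that for almost isometries $f\colon X\to Y$ and $g\colon Y\to Z$ the composite $g\circ f$ is again an almost isometry, and that the composite of the morphisms $F(f)$ and $F(g)$ in $\mathcal M$ is, by definition of $c_{XYZ}$ and Lemma \ref{Lemma_d}, the class $[d_g\circ d_f]$. Lemma \ref{comp} shows that $d_g\circ d_f$ and $d_{g\circ f}$ differ by a bounded additive constant, hence are coarsely equivalent, so $[d_{g\circ f}]=[d_g\circ d_f]$. Read as $F(g\circ f)=F(g)\circ F(f)$, this is functoriality of composition.

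It remains to check identities. For the identity almost isometry $\mathrm{id}_X$, with any admissible constant $C$, the metric of Lemma \ref{def_of d_f} computes, by the triangle inequality, to $d_{\mathrm{id}_X}(x,x')=\inf_{\tilde x\in X}(d_X(x,\tilde x)+C/2+d_X(\tilde x,x'))=d_X(x,x')+C/2$. An operator has finite propagation with respect to this metric if and only if it has finite propagation with respect to $d_X$, so $M_{d_{\mathrm{id}_X}}(X,X)=C^*_u(X)$. To conclude I would show that this class is neutral for composition, in the style of the computation in Proposition \ref{inv-semi}: for any $f\colon X\to Y$ one gets $d_{\mathrm{id}_Y}\circ d_f=d_f+C/2=d_f\circ d_{\mathrm{id}_X}$, so $[d_{\mathrm{id}_Y}\circ d_f]=[d_f]=[d_f\circ d_{\mathrm{id}_X}]$; since identities are unique, $[d_{\mathrm{id}_X}]$ is the identity morphism of $X$ and $F(\mathrm{id}_X)=\mathrm{id}_X$.

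The one genuine obstacle is conceptual rather than computational: for $F$ to even make sense one needs composition in $\mathcal M$ to be well defined on the equivalence classes $[d]$, and this is precisely where the bounded geometry assumption built into $\mathcal M_{AI}$ is used, through the bounded-geometry composition lemma and the Corollary above (which asserts that $[d_{XY}]=[d'_{XY}]$ and $[d_{YZ}]=[d'_{YZ}]$ force $[d_{YZ}\circ d_{XY}]=[d'_{YZ}\circ d'_{XY}]$). Once this well-definedness is granted, every functor axiom reduces to a coarse-equivalence statement already established, and no new estimates are required.
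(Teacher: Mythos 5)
Your proposal is correct and follows essentially the same route as the paper, which deduces the theorem directly from the composition lemma for almost isometries (the estimate $d_g\circ d_f\leq d_{g\circ f}\leq d_g\circ d_f+C$, giving $[d_{g\circ f}]=[d_g\circ d_f]$). You additionally spell out the identity axiom via $d_{\mathrm{id}_X}=d_X+C/2\sim d^0$ and the object-level countability and well-definedness points, all of which the paper leaves implicit but which are consistent with its preceding lemmas.
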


Note that in $\mathcal M$ we may compose also partial almost isometries, even if the range of the first one and the domain of the second one are disjoint.

\section{Case $Y=X$}

In this section we are interested in the case when $Y=X$ (with the same metric). In this case there is a special metric $d^0\in D(X,X)$ defined as follows. Let $Z=X\times\{0,1\}$. To distinguish between the two copies of $X$ in $Z$ we shall write $x$ for $(x,0)$ and $x'$ for $(x,1)$. For $x_1,x_2\in X$, set $d^0(x_1,x'_2)=d_X(x_1,x_2)+1$. Note that $M_{d^0}(X,X)=C^*_u(X)$.

\begin{lem}\label{XX}
Let $d\in D(X,X)$, and let $Id\in M_d(X,X)$. Then the metrics $d$ and $d^0$ are almost isometric.

\end{lem}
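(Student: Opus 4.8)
The plan is to show that the identity map of $Z = X \sqcup X$ is an almost isometry between $(Z,d)$ and $(Z,d^0)$. Since both $d$ and $d^0$ restrict to $d_X$ on each copy of $X$ (unprimed source, primed target), the two metrics already agree on pairs lying in a common copy, so the only quantity to control is the cross-distance $d(x_1,x_2')$, which I must compare with $d^0(x_1,x_2') = d_X(x_1,x_2)+1$.

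First I would extract a single constant from the hypothesis $Id\in M_d(X,X)$. By definition of the norm closure, choose a finite-propagation operator $T$ with $\|Id-T\|<\tfrac12$, say of propagation less than some $L>0$. The key observation is that each matrix entry is dominated by the operator norm, $|T_{xy}| = |\langle\delta_y,T\delta_x\rangle|\le\|T\|$, so for every $x\in X$,
$$
|1-T_{x,x'}| = |\langle\delta_{x'},(Id-T)\delta_x\rangle| \le \|Id-T\| < \tfrac12 ,
$$
whence $T_{x,x'}\neq 0$. Because $T$ has propagation less than $L$ with respect to $d$, a nonzero entry forces $d(x,x')<L$ for every $x\in X$. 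This is the only place where the operator-theoretic hypothesis enters; the remainder is pure metric geometry.

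Next I would propagate this diagonal bound to all cross-pairs using the triangle inequality in $(Z,d)$ together with $d(x_1',x_2')=d_X(x_1,x_2)$. For arbitrary $x_1,x_2\in X$,
$$
d(x_1,x_2')\le d(x_1,x_1')+d(x_1',x_2') < L + d_X(x_1,x_2),
$$
and symmetrically $d(x_1,x_2')\ge d(x_1',x_2')-d(x_1,x_1') > d_X(x_1,x_2)-L$. Hence $|d(x_1,x_2')-d_X(x_1,x_2)|<L$, and therefore
$$
|d(x_1,x_2')-d^0(x_1,x_2')| = |d(x_1,x_2')-d_X(x_1,x_2)-1| < L+1 .
$$
Together with the agreement of $d$ and $d^0$ on same-copy pairs, this shows that the identity map of $Z$ distorts every distance by at most $C=L+1$, i.e. it is an almost isometry, which establishes the claim.

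I do not anticipate a genuine obstacle: once the estimate $d(x,x')<L$ is in hand, everything reduces to the triangle inequality. The only points requiring care are the bookkeeping between the primed and unprimed copies, so that the cross-metric is correctly identified as the sole object to control, and the elementary but essential step that $\|Id-T\|<\tfrac12$ bounds the individual diagonal entries $1-T_{x,x'}$. One should also confirm that \emph{almost isometric} is meant in the sense of the identity map being an almost isometry of the two metrics on the common underlying set $Z$, which is the only reading consistent with $d,d^0\in D(X,X)$.
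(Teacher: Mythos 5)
Your proof is correct and follows essentially the same route as the paper's: approximate $Id$ by a finite-propagation $T$ with $\|Id-T\|<\tfrac12$, deduce $|1-T_{x,x'}|<\tfrac12$ and hence $d(x,x')\leq L$ from the propagation bound, then transfer this to all cross-pairs by the triangle inequality. Your write-up is in fact slightly cleaner in making explicit both the matrix-entry estimate $|T_{xy}|\le\|T\|$ and the intended meaning of ``almost isometric,'' but there is no substantive difference from the paper's argument.
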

\begin{proof}
As finite propagation operators are dense in $M_d(X,X)$, there exists $L>0$ and operator $T\in M_d(X,X)$ of propagation $\leq L$ such that $\|Id-T\|<\frac{1}{2}$. Then $|T_{x,x'}-1|<\frac{1}{2}$ for any $x\in X$. The latter implies that $d(x,x')\leq L$. Using the triangle inequality, we get
$$
d(x_1,x'_2)= d_X(x_1,x_2)+d(x_2,x'_2)\leq d_X(x_1,x_2)+L=d^0(x_1,x'_2)+L-1;
$$ 
$$
d^0(x_1,x'_2)=d_X(x_1,x_2)+1\leq d(x_1,x'_2)+d(x_2,x'_2)+1\leq d(x_1,x'_2)+L+1.
$$

\end{proof}

\begin{thm}
Let $d\in D(X,Y)$ and $d'\in D(Y,X)$ satisfy $M_{d'\circ d}(X,X)=M_{d^0}(X,X)$.
Then there exists an almost isometry $f:X\to Y$ such that $d$ and $d_f$ are almost isometric.

\end{thm}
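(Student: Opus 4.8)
The plan is to start from the composition formula for the metrics and extract an almost isometry from the hypothesis $M_{d' \circ d}(X,X) = M_{d^0}(X,X)$. Recall that $(d' \circ d)(x_1, x_2') = \inf_{y \in Y}\bigl(d(x_1, y) + d'(y, x_2')\bigr)$ for $x_1, x_2 \in X$. The key observation is that $M_{d^0}(X,X) = C^*_u(X)$ contains $Id$, so by hypothesis $Id \in M_{d' \circ d}(X,X)$. Applying Lemma \ref{XX} to the metric $d' \circ d$, I obtain that $d' \circ d$ and $d^0$ are almost isometric; in particular, there is a constant $L$ with $(d' \circ d)(x, x') \leq L$ for every $x \in X$, where $x'$ denotes the second copy of $x$ in $X \sqcup X$.

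Next I would unwind what this bound means. The inequality $(d' \circ d)(x, x') \leq L$ says that for each $x \in X$ there exists (up to an arbitrarily small $\varepsilon$) a point $y \in Y$ with $d(x, y) + d'(y, x) \leq L + \varepsilon$. Using that $d'$ restricted to $Y$ and $d$ restricted to $X$ agree with $d_Y$ and $d_X$ on the respective copies, I would choose, for each $x \in X$, such a near-optimal $y$ and define $f(x) = y$. This produces a map $f : X \to Y$ satisfying $d(x, f(x)) \leq L + 1$ for all $x$, so that $x$ and $f(x)$ are uniformly close in the metric $d$.

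The heart of the argument is to verify that this $f$ is an almost isometry, i.e. that $|d_Y(f(x_1), f(x_2)) - d_X(x_1, x_2)|$ is uniformly bounded. For one direction, the triangle inequality in the metric $d$ on $X \sqcup Y$ gives
$$
d_Y(f(x_1), f(x_2)) \leq d(f(x_1), x_1) + d_X(x_1, x_2) + d(x_2, f(x_2)) \leq d_X(x_1, x_2) + 2(L+1),
$$
and symmetrically $d_X(x_1, x_2) \leq d_Y(f(x_1), f(x_2)) + 2(L+1)$. Hence $f$ satisfies (\ref{eq1}) with $C = 2(L+1)$. Once $f$ is known to be an almost isometry, Lemma \ref{def_of d_f} produces the metric $d_f$, and it remains to check that $d$ and $d_f$ are almost isometric. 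Since both $d$ and $d_f$ restrict to $d_X$ and $d_Y$ on the two pieces, the comparison reduces to cross-distances $d(x, y)$ versus $d_f(x, y) = \inf_{\tilde x}\bigl(d_X(x, \tilde x) + C/2 + d_Y(f(\tilde x), y)\bigr)$; using $d(\tilde x, f(\tilde x)) \leq L + 1$ and the triangle inequality in $d$, each of these is bounded by the other up to an additive constant.

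The main obstacle I anticipate is the careful bookkeeping of constants and the $\varepsilon$-approximation in the infimum defining $d' \circ d$: the composite metric is given by an infimum that need not be attained, so one must argue with near-optimizers and confirm that the additive errors stay uniformly bounded in $x$ rather than accumulating. A related subtlety is that the selected point $y = f(x)$ is only approximately optimal, so I would fix a single $\varepsilon$ (say $\varepsilon = 1$) at the outset and absorb it into the constant $L$; the uniform-in-$x$ nature of the bound coming from Lemma \ref{XX} is exactly what guarantees this works. I do not expect bounded geometry to be needed for this statement, only the structural consequence that $Id$ lies in $M_{d'\circ d}(X,X)$.
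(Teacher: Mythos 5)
Your proposal is correct and follows essentially the same route as the paper: invoke Lemma \ref{XX} to get $d'\circ d$ almost isometric to $d^0$, extract from the uniform bound on $(d'\circ d)(x,x')$ a map $f$ with $d(x,f(x))$ uniformly bounded, and then compare $d$ with $d_f$ via the triangle inequality. Your explicit verification that $f$ satisfies (\ref{eq1}) is a detail the paper leaves implicit, but otherwise the arguments coincide.
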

\begin{proof}
By Lemma \ref{XX}, $d'\circ d$ and $d^0$ are almost isometric, and there exists $L>0$ such that for any $x\in X$, $\inf_{y\in Y}d(x,y)+d'(y,x)<L$. Therefore, for any $x\in X$ there exists $y\in Y$ such that $d(x,y)<L$. Set $f(x)=y$.   

It remains to check that the two metrics are almost isometric. Taking $\tilde x=x$, we get
\begin{eqnarray*}
d_f(x,y)&=&\inf_{\tilde x\in X}d_X(x,\tilde x)+C/2+d_Y(f(\tilde x),y)\leq
C/2+d_Y(f(x),y)\\
&\leq& C/2+d(f(x),x)+d(x,y)\leq C/2+L+d(x,y).
\end{eqnarray*}

On the other hand,
\begin{eqnarray*}
d(x,y)&\leq& d_X(x,\tilde x)+d(\tilde x,f(\tilde x))+d_Y(f(\tilde x),y)\leq d_X(x,\tilde x)+L+d_Y(f(\tilde x),y)\\
&=&d_X(x,\tilde x)+C/2+d_Y(f(\tilde x),y)+(L-C/2)
\end{eqnarray*}
holds for any $\tilde x\in X$, hence we may pass to the infimum and obtain that $d(x,y)\leq d_f(x,y)+L-C/2$.

\end{proof}

\begin{cor}
If $d\in D(X,Y)$, $d'\in D(Y,X)$, $M_{d\circ d'}=M_{d^0}(Y,Y)$, $M_{d'\circ d}=M_{d^0}(X,X)$. Then $X$ and $Y$ are almost isometric.

\end{cor}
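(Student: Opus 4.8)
The plan is to apply the preceding theorem once in each direction to produce two almost isometries, and then to verify that they are almost inverse to each other, which is exactly what the definition of almost isometric isomorphism demands.

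First I would use the hypothesis $M_{d'\circ d}(X,X)=M_{d^0}(X,X)$ in the preceding theorem to obtain an almost isometry $f\colon X\to Y$ for which $d$ is almost isometric to $d_f$. Interchanging the roles of $X$ and $Y$ and of $d$ and $d'$, the hypothesis $M_{d\circ d'}(Y,Y)=M_{d^0}(Y,Y)$ produces an almost isometry $g\colon Y\to X$ for which $d'$ is almost isometric to $d_g$. It then remains to show that $d_X(g\circ f(x),x)$ and $d_Y(f\circ g(y),y)$ are bounded uniformly in $x$ and $y$; granting this, $f$ is an almost isometric isomorphism and $X$ and $Y$ are almost isometric.

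Next I would build a chain of almost isometries of metrics on $X\sqcup X$. Here ``almost isometric'' for two metrics on the same disjoint union means that they differ by a bounded constant, a relation which is transitive and which is preserved by the composition of Lemma \ref{Lemma_d}: if $|d_1-d_1'|\le C_1$ and $|d_2-d_2'|\le C_2$ then, taking infima in the defining formula, $|d_2\circ d_1-d_2'\circ d_1'|\le C_1+C_2$. Since $M_{d'\circ d}(X,X)=M_{d^0}(X,X)$, Lemma \ref{XX} shows $d'\circ d$ is almost isometric to $d^0$; combining the facts that $d$ is almost isometric to $d_f$ and $d'$ to $d_g$ with the preceding observation shows $d'\circ d$ is almost isometric to $d_g\circ d_f$; and Lemma \ref{comp}, which gives $d_g\circ d_f\le d_{g\circ f}\le d_g\circ d_f+C$, shows $d_g\circ d_f$ is almost isometric to $d_{g\circ f}$. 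By transitivity $d_{g\circ f}$ is almost isometric to $d^0$, and the same reasoning on $Y\sqcup Y$ shows $d_{f\circ g}$ is almost isometric to $d^0$.

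\textbf{The main obstacle} is then to convert the statement that $d_{g\circ f}$ is almost isometric to $d^0$ into a uniform bound on $d_X(g\circ f(x),x)$. For this I would evaluate both metrics on the pair $(x,x')$. Setting $\tilde x=x$ in the definition of $d_{g\circ f}$ gives the upper bound $d_{g\circ f}(x,x')\le C/2+d_X(g\circ f(x),x)$. For the matching lower bound, let $C_0$ be an almost-isometry constant of the composite $g\circ f$ (a composite of almost isometries is again one); then $d_X(g\circ f(\tilde x),x)\ge d_X(g\circ f(x),x)-d_X(\tilde x,x)-C_0$, and the two copies of $d_X(\tilde x,x)$ in the defining infimum cancel, leaving $d_{g\circ f}(x,x')\ge C/2+d_X(g\circ f(x),x)-C_0$. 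Since $d^0(x,x')=1$, the almost isometry of $d_{g\circ f}$ and $d^0$ forces $d_{g\circ f}(x,x')$ to be bounded, whence $d_X(g\circ f(x),x)$ is bounded independently of $x$. Applying the identical argument to $d_{f\circ g}$ bounds $d_Y(f\circ g(y),y)$, and the corollary follows.
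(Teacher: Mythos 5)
Your proposal is correct, and it follows the route the paper clearly intends: the paper states this corollary with no proof at all, leaving it as an immediate consequence of applying the preceding theorem in both directions. You supply the one genuinely nontrivial step — converting the chain of "almost isometric" relations $d^0\sim d'\circ d\sim d_g\circ d_f\sim d_{g\circ f}$ into the uniform bounds on $d_X(g\circ f(x),x)$ and $d_Y(f\circ g(y),y)$ required by the definition of almost isometric isomorphism — and your estimate of $d_{g\circ f}(x,x')$ from above and below does this correctly (one could alternatively bound $d'\circ d(x,(g\circ f(x))'')\le d(x,f(x))+d'(f(x),g(f(x)))$ directly, but your version is equally valid).
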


For a metric space $(X,d_X)$ consider the set $\mathcal Mor(X,X)$. It is a semigroup with the neutral element $[d^0]$ (or, equivalently, $M_{d^0}(X,X)=C^*_u(X)$), and with the involution $d\mapsto d^*$, where $d^*(x_1,x_2')=d(x_2,x_1')$. An element $[d]\in\mathcal Mor(X,X)$ is selfadjoint if $M_{d^*}(X,X)=M_d(X,X)$, and is an idempotent if $M_{d\circ d}(X,X)=M_d(X,X)$.
Note that, by Proposition \ref{inv-semi}, the metric $d^*\circ d\in D(X,X)$ is a selfadjoint idempotent for any $d\in D(X,Y)$.

\begin{lem}
Let $d$ be a selfadjoint idempotent metric. Then $M_d(X,X)$ is a $C^*$-algebra. In particular, $M_{d^*\circ d}(X,X)$ and $M_{d\circ d^*}(Y,Y)$ are $C^*$-algebras for any $d\in D(X,Y)$.

\end{lem}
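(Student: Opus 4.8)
The plan is to show that $M_d(X,X)$ is a norm-closed $*$-subalgebra of $\mathbb{B}(H_X)$, which is precisely what it means to be a $C^*$-algebra (concretely realized on $H_X$). By construction $M_d(X,X)$ is already a norm-closed linear subspace, so only two things remain to verify: stability under the operator involution $T\mapsto T^*$, and stability under composition. I would establish these from the selfadjointness and idempotence hypotheses respectively.

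For the involution I would use selfadjointness together with the identity $M_{d^*}(X,X)=(M_d(X,X))^*$ recorded above, where the right-hand side denotes the set of Hilbert-space adjoints. Writing out matrix entries on $Z=X\sqcup X$, the entry $(T^*)_{xy}=\overline{T_{yx}}$ vanishes exactly when $d(y,x')>L$, i.e. when $d^*(x,y')>L$; hence $T\mapsto T^*$ carries finite-propagation operators for $d$ to finite-propagation operators for $d^*$, and after taking closures (using $d^{**}=d$) one gets $(M_d(X,X))^*=M_{d^*}(X,X)$. The hypothesis $M_{d^*}(X,X)=M_d(X,X)$ then yields $(M_d(X,X))^*=M_d(X,X)$, so $M_d(X,X)$ is closed under the adjoint.

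For the multiplication I would first treat finite-propagation operators and then pass to the closure. If $S,T\in\mathbb{M}_d(X,X)$ have finite propagation, then Lemma \ref{composition}, applied with all three spaces equal to $X$ and both metrics equal to $d$, gives $S\circ T\in\mathbb{M}_{d\circ d}(X,X)\subseteq M_{d\circ d}(X,X)$; idempotence, $M_{d\circ d}(X,X)=M_d(X,X)$, then places $S\circ T$ in $M_d(X,X)$. For general $S,T\in M_d(X,X)$ I would approximate in norm by finite-propagation operators $S_n\to S$, $T_n\to T$ and invoke continuity of composition in $\mathbb{B}(H_X)$ together with closedness of $M_d(X,X)$ to conclude $S\circ T\in M_d(X,X)$. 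Combined with the involution step, this exhibits $M_d(X,X)$ as a concrete $C^*$-algebra.

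Finally, for the ``in particular'' clause I would invoke the observation, noted above as a consequence of Proposition \ref{inv-semi}, that $d^*\circ d\in D(X,X)$ and $d\circ d^*\in D(Y,Y)$ are selfadjoint idempotents for any $d\in D(X,Y)$; the main assertion then applies to them verbatim. The only mild obstacle I anticipate is the bookkeeping between the two index conventions on $Z=X\sqcup X$ in the involution step, so that $d^*$ is correctly matched to the operator adjoint; the density-and-continuity argument for multiplication is routine once the finite-propagation case is handled by Lemma \ref{composition} and idempotence.
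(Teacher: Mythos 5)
Your proposal is correct and takes the same approach as the paper, whose entire proof is the one-line observation that $M_d(X,X)$ is closed under products and adjoints; you have simply supplied the details (adjoints via $(M_d(X,X))^*=M_{d^*}(X,X)$ and selfadjointness, products via the composition lemma, idempotence, and a density argument). No issues.
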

\begin{proof}
$M_d(X,X)$ is closed under taking products and adjoints.

\end{proof}

Note that there are typically a lot of idempotent metrics (see Example \ref{idem}) below. This means that there is no cancellation in the semigroup $\mathcal Mor(X,X)$. Indeed, if $d\circ d\approx d$ then cancellation implies $d\approx d^0$.

Note also that Proposition \ref{inv-semi} implies that for any $d\in D(X,X)$ there exists $d'\in D(X,X)$ such that $[d\circ d'\circ d]=[d]$, so $\mathcal Mor(X,X)$ is a regular semigroup.

\begin{example}\label{idem}
Let $X=\mathbb Z$ with the standard metric $d(n,m)=|n-m|$, $n,m\in\mathbb Z$. Let $f:\mathbb N\to\mathbb N$ denote the identity map. We can consider $f$ as a partial almost isometry from $\mathbb Z$ to itself. Let $d_f$ be the metric on the union of two copies of $\mathbb Z$ determined by $f$. Then it is easy to see that $d_f^*=d_f$ and $d_f\circ d_f=d_{f\circ f}=d_f$, while $d_f$ is not coarsely equivalent to $d^0$.
The $C^*$-algebra $M_{d_f}(X,X)$ is isomorphic to the sum of $C^*_u(\mathbb N)$ and the algebra of compact operators.

\end{example}


\section{Partial order on $\mathcal Mor(X,Y)$}

Given $d,d'\in D(X,Y)$ we say that $d\preceq d'$ if $M_d(X,Y)\subset M_{d'}(X,Y)$. For example, if $d(x,y)\geq d'(x,y)$ for any $x\in X$, $y\in Y$ then $d\preceq d'$. If $d_1,d_2\in D(X,Y)$ then $d$ defined by $d(x,y)=\max(d_1(x,y),d_2(x,y))$ satisfies $d\preceq d_1$, $d\preceq d_2$, so the partially ordered set $D(X,Y)$ is downwards directed. The following example shows that it is not upwards directed.

\begin{example}
Let $X=Y=\mathbb Z$, $f_1=\id_{\mathbb Z}$, $f_2(n)=-n$, $n\in\mathbb Z$. If there would exist a metric $d\in D(X,Y)$ such that $d_{f_1}\preceq d$ and $d_{f_2}\preceq d$ then $M_d(X,Y)$ would contain $T=\sum_{n\in\mathbb Z}e_{n,n'}+e_{n,-n'}$ (we denote by $x'$ the point $x$ in the second copy of $X$). Then there exists $L>0$ such that $d(n,n')\leq L$ and $d(n,-n')\leq L$ for any $n\in\mathbb Z$. If $n>L$ then this contradicts the triangle inequality for the triangle $n$, $n'$, $-n'$.

\end{example}

The partial order is compatible with the algebraic operations:
\begin{lem}
If $d_1,d_2\in D(,Y)$, $d'_1,d'_2\in D(Y,Z)$, $d_1\preceq d_2$, $d'_1\preceq d'_2$ then 
\begin{enumerate}
\item
$d'_1\circ d_1\preceq d'_2\circ d_2$;
\item
$d^*_1\preceq d^*_2$.
\end{enumerate}
\end{lem}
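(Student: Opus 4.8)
The statement asks me to show that the partial order $\preceq$ defined by inclusion of uniform Roe bimodules is compatible with composition and involution. Both parts should follow directly from unwinding the definitions, since $\preceq$ is literally an inclusion of the associated bimodules.

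For part (2), I would argue as follows. By definition $d_1\preceq d_2$ means $M_{d_1}(X,Y)\subset M_{d_2}(X,Y)$. Recall that the involution is given by $M_{d^*}(Y,X)=(M_d(X,Y))^*$, i.e. the morphism $d^*$ corresponds to the adjoint bimodule. Taking adjoints is an isometric, order-preserving operation on operators (if $T\in M_{d_1}(X,Y)$ then $T^*\in (M_{d_1}(X,Y))^* = M_{d_1^*}(Y,X)$, and this is a bijection between the two bimodules and their adjoints). Hence $M_{d_1}(X,Y)\subset M_{d_2}(X,Y)$ immediately yields $M_{d_1^*}(Y,X)=(M_{d_1}(X,Y))^*\subset (M_{d_2}(X,Y))^*=M_{d_2^*}(Y,X)$, which is exactly $d_1^*\preceq d_2^*$. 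So part (2) is essentially formal.

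For part (1), the key is the inclusion $\bar c\colon M_{d_{XY}}(X,Y)\hat\otimes M_{d_{YZ}}(Y,Z)\to M_d(X,Z)$ established just after Lemma \ref{composition}, together with the observation that $\hat\otimes$ is monotone in each variable. Concretely, given $d_1\preceq d_2$ and $d_1'\preceq d_2'$, I would first note that $M_{d_1}(X,Y)\subset M_{d_2}(X,Y)$ and $M_{d_1'}(Y,Z)\subset M_{d_2'}(Y,Z)$ imply, directly from the definition of $\hat\otimes$ as the norm closure of the span of compositions $S\circ T$, that
\[
M_{d_1'}(Y,Z)\hat\otimes M_{d_1}(X,Y)\ \subset\ M_{d_2'}(Y,Z)\hat\otimes M_{d_2}(X,Y);
\]
indeed every generating composition $S\circ T$ on the left, with $T\in M_{d_1}(X,Y)$ and $S\in M_{d_1'}(Y,Z)$, is also a generating composition on the right, and closures of nested sets remain nested. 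Composing this with the inclusion $\bar c$ gives $M_{d_1'\circ d_1}(X,Z)\supset$-side control, and I would use that the composition of morphisms is defined precisely via this map to conclude $M_{d_1'\circ d_1}(X,Z)\subset M_{d_2'\circ d_2}(X,Z)$, i.e. $d_1'\circ d_1\preceq d_2'\circ d_2$.

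The only subtlety I anticipate is the relationship between the abstract composition $d'\circ d$ of metrics (Lemma \ref{Lemma_d}) and the operator-level tensor product $\hat\otimes$: the map $\bar c$ is in general only an inclusion $M_{d_{XY}}(X,Y)\hat\otimes M_{d_{YZ}}(Y,Z)\subset M_d(X,Z)$, not an equality (equality requires bounded geometry). So I must be careful to phrase the monotonicity purely at the level of the defining bimodules $M_{d'\circ d}(X,Z)$, working directly from the propagation estimate in Lemma \ref{composition}: an operator of finite propagation for $d_1$ composed with one of finite propagation for $d_1'$ has finite propagation for $d_1'\circ d_1$, and since $d_1'\circ d_1\preceq d_2'\circ d_2$ is what we want, the cleanest route is to show that every finite-propagation operator landing in $M_{d_1'\circ d_1}(X,Z)$ already lies in $M_{d_2'\circ d_2}(X,Z)$ by re-running the propagation bound with the larger bimodules. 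This step — making sure the inclusion survives the passage to the composed metric without invoking bounded geometry — is the one place requiring genuine care rather than pure formalism.
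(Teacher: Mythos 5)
Your part (2) is complete: taking adjoints is a bijection carrying $M_{d_i}(X,Y)$ onto $M_{d_i^*}(Y,X)$, so the inclusion of bimodules passes directly to the adjoints. The paper's own proof of the whole lemma is the single word ``Obvious,'' so there is no detailed argument to compare against; but for part (1) your closing paragraph correctly identifies a real problem and then does not resolve it. Monotonicity of $\hat\otimes$ gives
\[
M_{d_1}(X,Y)\hat\otimes M_{d_1'}(Y,Z)\ \subset\ M_{d_2}(X,Y)\hat\otimes M_{d_2'}(Y,Z)\ \subset\ M_{d_2'\circ d_2}(X,Z),
\]
but without bounded geometry the left-hand side is only a (possibly proper) closed subspace of $M_{d_1'\circ d_1}(X,Z)$, so this does not show that every element of $M_{d_1'\circ d_1}(X,Z)$ lies in $M_{d_2'\circ d_2}(X,Z)$, which is what $d_1'\circ d_1\preceq d_2'\circ d_2$ asserts.

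Your proposed repair --- ``re-running the propagation bound with the larger bimodules'' --- cannot be executed from the stated hypotheses alone: $d_1\preceq d_2$ is an inclusion of bimodules and carries no direct pointwise information about the metrics, and a finite-propagation operator for $d_1'\circ d_1$ need not factor through $Y$ at all (that factorization is exactly what bounded geometry buys). Two honest ways to close the gap. (a) Assume bounded geometry: then the lemma asserting $M_{d_{XY}}(X,Y)\hat\otimes M_{d_{YZ}}(Y,Z)=M_d(X,Z)$ upgrades your chain of inclusions to the full statement, and your first argument is already a complete proof; this is also the setting in which the paper's composition of morphisms is well defined (cf.\ the Corollary following that lemma), so it is almost certainly the intended reading. (b) Assume $X,Y,Z$ uniformly discrete and proper and use the lemma of Section 1: its proof shows that $M_{d_1}\subset M_{d_2}$ forces $d_2(x,y)\leq f(d_1(x,y))$ for some monotone $f$, and likewise $d_2'\leq g(d_1')$; choosing $y$ with $d_1(x,y)+d_1'(y,z)\leq (d_1'\circ d_1)(x,z)+1$ then yields $(d_2'\circ d_2)(x,z)\leq f\bigl((d_1'\circ d_1)(x,z)+1\bigr)+g\bigl((d_1'\circ d_1)(x,z)+1\bigr)$, so finite propagation for $d_1'\circ d_1$ implies finite propagation for $d_2'\circ d_2$ and the inclusion of bimodules follows by taking closures. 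Either route works; as written, part (1) is not yet a proof in the generality in which you have posed it.
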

\begin{proof}
Obvious.

\end{proof}

It would be interesting to have a description of maximal elements in $\mathcal Mor(X,Y)$. In general this is difficult, but here are two cases, when this is easy. Let $X$ and $Y$ be almost isometric, with an almost isometry $f:X\to Y$. 

\begin{prop}
$d_f$ is a maximal element in $\mathcal Mor(X,Y)$ for any almost isometry $f$.

\end{prop}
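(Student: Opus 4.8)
The plan is to show that $[d_f]$ admits no strictly larger element in $\bigl(\mathcal Mor(X,Y),\preceq\bigr)$: assuming $d_f\preceq d$ for some $d\in D(X,Y)$, I will prove that $d\preceq d_f$ as well, so that $M_d(X,Y)=M_{d_f}(X,Y)$, i.e. $[d]=[d_f]$, which is exactly maximality. Throughout I use two elementary estimates that follow at once from the formula for $d_f$ in Lemma \ref{def_of d_f}: evaluating the infimum at $\tilde x=x$ gives $d_f(x,y)\le d_Y(f(x),y)+C/2$ for all $x\in X$, $y\in Y$, and in particular $d_f(x,f(x))\le C/2$.

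The first step is to extract a uniform bound from the hypothesis $d_f\preceq d$. I choose a transversal $X_0\subset X$ on which $f$ is injective with $f(X_0)=f(X)$, and set $V=\sum_{x\in X_0}e_{x,f(x)}$. Since distinct points of $X_0$ have distinct images, $V$ is a partial isometry, hence bounded, and the estimate $d_f(x,f(x))\le C/2$ shows that $V$ has $d_f$-propagation at most $C/2$; thus $V\in M_{d_f}(X,Y)\subset M_d(X,Y)$. Approximating $V$ in norm to within $\tfrac12$ by an operator $T$ of finite $d$-propagation $R$ and comparing matrix entries (the operator norm dominates each entry, and $V_{f(x),x}=1$ for $x\in X_0$) forces $T_{f(x),x}\neq 0$, whence $d(x,f(x))\le R$ for every $x\in X_0$. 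Finally, for arbitrary $x\in X$ there is $x_0\in X_0$ with $f(x_0)=f(x)$; almost isometricity of $f$ bounds the fibre diameter by $C$, so $d_X(x,x_0)\le C$ and the triangle inequality gives $d(x,f(x))\le R+C=:L_0$ for all $x\in X$.

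With this bound in hand, the second step establishes $d\preceq d_f$ directly on finite-propagation operators. Let $T\in\mathbb M_d(X,Y)$ have $d$-propagation at most $R'$. Whenever $T_{yx}\neq 0$ we have $d(x,y)\le R'$, and then, using the triangle inequality for $d$ together with $d(x,f(x))\le L_0$,
\[
d_f(x,y)\le d_Y(f(x),y)+C/2\le d(x,f(x))+d(x,y)+C/2\le L_0+R'+C/2 .
\]
Hence $T$ has $d_f$-propagation at most $L_0+R'+C/2$, so $T\in M_{d_f}(X,Y)$. Since $M_{d_f}(X,Y)$ is norm-closed and now contains all operators of finite $d$-propagation, it contains their closure $M_d(X,Y)$, that is $d\preceq d_f$. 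Combined with the hypothesis $d_f\preceq d$ this yields $[d]=[d_f]$.

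The only genuinely non-formal point is the uniform bound $d(x,f(x))\le L_0$ in the first step, and it is here that the hypothesis $d_f\preceq d$ is actually used. Individual rank-one operators $e_{x,f(x)}$ are compact and therefore lie in \emph{every} $M_d(X,Y)$, so they carry no information about $d$; the hypothesis must be applied to the single infinite-rank operator $V$ in order to produce a bound that is uniform in $x$. The passage to a transversal, rather than using $f$ itself, is a minor device ensuring that $V$ is bounded when $f$ is not injective, which is legitimate precisely because almost isometries have fibres of diameter at most $C$.
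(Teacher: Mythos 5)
Your proof is correct, and it reaches the crucial uniform bound $d(x,f(x))\le L_0$ by a genuinely different route than the paper. The paper deduces from $d_f\preceq d$ the existence of a monotone control function $h$ with $d(x,y)\le h(d_f(x,y))$ and evaluates it at $d_f(x,f(x))=C/2$; that deduction is the (ii)$\Rightarrow$(i) direction of the paper's lemma relating coarse equivalence of metrics to equality of bimodules, which was established only for uniformly discrete, proper spaces, so the paper's argument silently carries that hypothesis. You instead extract only the single bound actually needed, by pushing the explicit band operator $V=\sum_{x\in X_0}e_{x,f(x)}$ (supported on a transversal $X_0$ so that $V$ is a partial isometry, legitimate since fibres of an almost isometry have diameter at most $C$) through the inclusion $M_{d_f}(X,Y)\subset M_d(X,Y)$ and reading off matrix entries of a finite-$d$-propagation approximant within norm distance $\tfrac12$ --- the same device the paper uses in its fullness lemma and in Lemma \ref{XX}. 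This makes your argument self-contained and free of the discreteness/properness assumption. The concluding step --- $d_f(x,y)\le d_Y(f(x),y)+C/2\le d(x,y)+L_0+C/2$, hence every finite-$d$-propagation operator has finite $d_f$-propagation and $d\preceq d_f$ --- is the same triangle-inequality estimate the paper performs, only phrased at the level of propagation of operators rather than as a pointwise comparison of metrics; the two formulations are equivalent by the paper's remark that a pointwise inequality up to an additive constant implies the corresponding inclusion of bimodules.
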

\begin{proof}
Suppose that there exists $d\in D(X,Y)$ such that $d_f\preceq d$. Then thre is a monotonely increasing function $h$ such that
$d(x,y)\leq h(d_f(x,y))$ for any $x\in X$, $y\in Y$. Let $C$ be the constant from the definition of $d_f$ in Lemma \ref{def_of d_f}. As $d_f(x,f(x))=C/2$, we have $d(x,f(x))\leq h(C/2)$ for any $x\in X$. Then
\begin{eqnarray*}
d(x,y)&\geq& d_Y(f(x),y)-d(x,f(x))\geq d_Y(f(x),y)-h(C/2)\\
&=&d_Y(f(x),y)+C/2-(C/2+h(C/2))\\
&\geq& d_f(x,y)-(C/2+h(C/2)),
\end{eqnarray*}
which implies that $d\preceq d_f$, hence $d\sim d_f$.

\end{proof}

\begin{prop}
If $d^*\circ d$ is maximal for some $d\in D(X,Y)$ then $d^*\circ d\sim d^0$. Moreover, in this case there exists an almost isometry (not necessarily invertible) $f:X\to Y$ such that $d_f\sim d$.

\end{prop}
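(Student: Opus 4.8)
The plan is to reduce the statement to the Theorem of Section 4 (the one producing an almost isometry from the hypothesis $M_{d'\circ d}(X,X)=M_{d^0}(X,X)$), so that the real content lies in the first assertion, $d^*\circ d\sim d^0$. Write $p=d^*\circ d\in D(X,X)$. First I would unwind the composition via Lemma \ref{Lemma_d}: since $d^*$ is the metric $d$ viewed as a morphism from $Y$ to $X$, the value $d^*(y,x_2')$ equals the $d$-distance $d(x_2,y)$, so
$$
p(x_1,x_2')=\inf_{y\in Y}\bigl(d(x_1,y)+d(x_2,y)\bigr),\qquad x_1,x_2\in X.
$$

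The key estimate is $p(x_1,x_2')\geq d_X(x_1,x_2)$, which is immediate from the triangle inequality for $d$ on $X\sqcup Y$, namely $d(x_1,y)+d(y,x_2)\geq d_X(x_1,x_2)$ for every $y$. I would then translate this into the partial order. If $T$ has propagation less than $L$ with respect to $p$, then $T_{x_1x_2}=0$ whenever $p(x_1,x_2')\geq L$, and in particular whenever $d_X(x_1,x_2)\geq L$, since then $p(x_1,x_2')\geq d_X(x_1,x_2)\geq L$. Recalling that $d^0(x_1,x_2')=d_X(x_1,x_2)+1$, this shows $T$ has finite propagation with respect to $d^0$, so $\mathbb M_p(X,X)\subseteq\mathbb M_{d^0}(X,X)$ and, on passing to closures, $M_p(X,X)\subseteq M_{d^0}(X,X)=C^*_u(X)$. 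This is exactly the relation $p\preceq d^0$. At this point maximality closes the first part: since $p$ is maximal and $p\preceq d^0$, the defining property of a maximal element (as used in the proof that $d_f$ is maximal) forces $d^0\preceq p$ as well, whence $M_p(X,X)=M_{d^0}(X,X)$, i.e. $d^*\circ d\sim d^0$. I note that $d^0$ is itself a selfadjoint idempotent, being the neutral element, so the argument is unaffected even if maximality is understood only among idempotent classes.

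For the second assertion I would simply invoke the Theorem of Section 4 with $d'=d^*$: its hypothesis $M_{d^*\circ d}(X,X)=M_{d^0}(X,X)$ is precisely what the first part established, and it therefore yields an almost isometry $f:X\to Y$ such that $d$ and $d_f$ are almost isometric. Finally I would observe that almost isometric metrics are coarsely equivalent (take the monotonely increasing function $t\mapsto t+C$), so by the lemma asserting that coarse equivalence gives equal uniform Roe bimodules we conclude $d_f\sim d$, as required.

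There is no deep obstacle here; the proposition is essentially a corollary of the composition formula, the single triangle-inequality estimate $p\geq d_X$, and the previously established theorem. The only points demanding care are the bookkeeping in identifying $d^*(y,x_2')$ with $d(x_2,y)$ when unwinding $p$, and checking that maximality is applied in the correct direction, namely passing from $p\preceq d^0$ to $d^0\preceq p$ rather than the reverse.
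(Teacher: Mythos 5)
Your proof is correct. The first assertion is handled exactly as in the paper: you compute $d^*\circ d(x_1,x_2')=\inf_{y}(d(x_1,y)+d(x_2,y))\geq d_X(x_1,x_2)$ from the triangle inequality, deduce $d^*\circ d\preceq d^0$, and let maximality force the reverse inequality. For the second assertion you diverge organizationally: you invoke the Theorem of Section~4 with $d'=d^*$, whose hypothesis $M_{d^*\circ d}(X,X)=M_{d^0}(X,X)$ is precisely what the first part delivers, and then pass from ``$d$ and $d_f$ are almost isometric'' to $d_f\sim d$ via coarse equivalence. The paper instead reproves that construction inline (extract $L$ with $d^*\circ d(x,x'')\leq L$, pick $y$ with $d(x,y)\leq L$, set $f(x)=y$, verify the almost isometry and the two-sided bound $d(x,y)\leq d_f(x,y)\leq d(x,y)+2L$). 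The two routes are mathematically the same argument; yours avoids duplicating the construction and makes the logical dependence on the earlier Theorem explicit, at the small cost of having to note separately that almost isometric metrics give equal Roe bimodules, a step the paper's direct estimate renders unnecessary. Both are valid.
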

\begin{proof}
Let $x,y,z\in X$. We write $x'$ (resp. $x''$) for $x$ in the second (resp. the third) copy of $X$. Then
\begin{eqnarray*}
d^*\circ d(x,z'')&=&\inf_{y\in X}d(x,y')+d^*(y',z'')=\inf_{y\in X}d(x,y')+d^*(y,z')\\
&=&\inf_{y\in X}d(x,y')+d(y',z)\geq d_X(x,z)=d^0(x,z')-1.
\end{eqnarray*}
Thus, $d^*\circ d\preceq d^0$. Maximality implies that $d^*\circ d\sim d^0$.

In particular, there is a monotonely increasing map $h$ such that $d^*\circ d(x,z'')\leq h(d^0(x,z'))$. Then, in the case when $z=x$, we have $d^*\circ d(x,x'')\leq L$ for some $L>0$. Then, for any $x\in X$, there exists $y\in Y$ such that $d(x,y)\leq L$. Set $f(x)=y$. The triangle inequality for points $x,\tilde x\in X$, $f(x),f(\tilde x)\in Y$ gives
$$
d(x,\tilde x)-2L\leq d(f(x),f(\tilde x))\leq d(x,\tilde x)+2L,
$$  
hence $f$ is an almost isometry.

By the triangle inequality we have $d_f(x,y)\geq d(x,y)$ for any $x\in X$, $y\in Y$. Taking $\tilde x=x$, we have
$$
d_f(x,y)=\inf_{\tilde x\in X}d_X(x,\tilde x)+d(\tilde x,f(\tilde x))+d_Y(f(\tilde x),y)\leq L+d(f(x),y),
$$
and by the triangle inequality we have 
$$
d_Y(f(x),y)\leq d(x,f(x))+d(x,y)\leq d(x,y)+L,
$$
therefore, $d_f(x,y)\leq d(x,y)+2L$, hence $d_f\sim d$.

\end{proof}

\section{Concluding remarks}

1. Parallel to the uniform Roe bimodules over uniform Roe algebras, we can consider Roe bimodules over Roe algebras.

2. Some our results are true only in the case of metric spaces of bounded geometry. More general case can be included into our considerations using the approach from \cite{MZ}.

3. More general coarse maps than almost isometries can be incorporated as morphisms if we widen the latter. Namely, given $(X,d_X)$ and $(Y,d_Y)$, we should consider coarse equivalence class of metrics $[d]$ on $X\sqcup Y$ such that for any $d'_X\sim_c d_X$ there exists $d_1\in[d]$ such that $d_1|_X=d'_X$ and $d_1|_Y\sim_c d_Y$, and for any $d'_Y\sim_c d_Y$ there exists $d_2\in[d]$ such that $d_2|_X\sim_c d_X$ and $d_2|_Y=d'_Y$.  

4. As uniform Roe algebras and uniform Roe bimodules contain all compact operators, we can pass to the quotients and get the quotient $C^*_u(X)/\mathbb K-C^*_u(Y)/\mathbb K$-bimodule $M_d(X,Y)/\mathbb K$ for $d\in D(X,Y)$. Using these quotient Roe bimodules as morphisms, we can get more 2-morphisms. For example, if we take $X=Y=\{n^2:n\in\mathbb N\}$ then $M_{d^0}(X,X)=l^\infty(\mathbb N)+\mathbb K$, where $l^\infty(\mathbb N)$ stands for diagonal operators, has only scalar bimodule homomorphisms, while $M_{d^0}(X,X)/\mathbb K\cong l^\infty(\mathbb N)/c_0$ has more homomorphisms.

\end{document}